\long\def\symbolfootnote[#1]#2{\begingroup
\def\thefootnote{\fnsymbol{footnote}}\footnote[#1]{#2}\endgroup}
\newtheorem{theorem}{Theorem}[section]
\newtheorem{lemma}[theorem]{Lemma}
\newtheorem{thm}[theorem]{Theorem}
\newtheorem{sublemma}[theorem]{Sublemma}
\newtheorem{prop}[theorem]{Proposition}
\newtheorem{cor}[theorem]{Corollary}
\newtheorem{question}{Question}
\theoremstyle{definition}
\newtheorem{rem}[theorem]{Remark}
\newtheorem{defin}[theorem]{Definition}
\newcommand{\id}{\mathrm{id}}
\newcommand{\N}{\mathbf{N}}
\newcommand{\R}{\mathbf{R}}
\newcommand{\pla}{\k{a}}
\begin{document}

\title{2-dimensional Coxeter groups are biautomatic}

\author[Z.~Munro]{Zachary Munro}
	
\address{
Department of Mathematics and Statistics,
McGill University,
Burnside Hall,
805 Sherbrooke Street West,
Montreal, QC,
H3A 0B9, Canada}

\email{zachary.munro@mcgill.ca}

\author[D.~Osajda]{Damian Osajda$^{\dag}$}
\address{Instytut Matematyczny,
	Uniwersytet Wroc\l awski\\
	pl.\ Grun\-wal\-dzki 2/4,
	50--384 Wroc\-{\l}aw, Poland}
\address{Institute of Mathematics, Polish Academy of Sciences\\
	\'Sniadeckich 8, 00-656 War\-sza\-wa, Poland}
\email{dosaj@math.uni.wroc.pl}
\thanks{$\dag \ddag$ Partially supported by (Polish) Narodowe Centrum Nauki, UMO-2018/30/M/ST1/00668.}

\author[P.~Przytycki]{Piotr Przytycki$^{\ddag}$}
	
\address{
Department of Mathematics and Statistics,
McGill University,
Burnside Hall,
805 Sherbrooke Street West,
Montreal, QC,
H3A 0B9, Canada}

\email{piotr.przytycki@mcgill.ca}

\thanks{$\ddag$ Partially supported by NSERC and AMS}

\maketitle

\begin{abstract}
\noindent
Let $W$ be a $2$-dimensional Coxeter group, that is, one with $\frac{1}{m_{st}}+\frac{1}{m_{sr}}+\frac{1}{m_{tr}}\leq 1$ for all triples of distinct $s,t,r\in S$. We prove that $W$ is biautomatic. We do it by showing that a natural geodesic language is regular (for arbitrary~$W$), and satisfies the fellow traveller property. As a consequence, by the work of Jacek \'{S}wi\pla tkowski, groups acting properly and cocompactly on buildings of type~$W$ are also biautomatic. We also show that the fellow traveller property for the natural language fails for $W=\widetilde{A}_3$.
\end{abstract}

\section{Introduction}
\label{sec:intro}

A \emph{Coxeter group} $W$ is a group generated by a finite set
$S$ subject only to relations $s^2=1$ for $s\in S$ and
$(st)^{m_{st}}=1$ for $s\neq t\in S$, where $m_{st}=m_{ts}\in \{2,3,\ldots,\infty\}$.
Here the convention is that $m_{st}=\infty$ means that we do
not impose a relation between $s$ and $t$. We say that $W$ is
\emph{$2$-dimensional} if for any triple of distinct elements $s,t,r\in S$,
the group $\langle s,t,r\rangle$ is infinite. In other words,
$\frac{1}{m_{st}}+\frac{1}{m_{sr}}+\frac{1}{m_{tr}}\leq 1$.

Consider an arbitrary group $G$ with a finite symmetric generating set
$S$. For $g\in G$, let $\ell(g)$ denote
the \emph{word length} of $g$, that is, the minimal number $n$
such that $g=s_1\cdots s_n$ with $s_i\in S$ for $i=1,\ldots,
n$. Let $S^*$ denote the set of all words over $S$.
If $v\in S^*$ is a word of length $n$, then by $v(i)$ we
denote the prefix of $v$ of length $i$ for $i=1,\ldots, n-1$,
and the word $v$ itself for $i\geq n$. For $1\leq i\leq j\leq
n$ by $v(i,j)$ we denote the subword of $v(j)$ obtained by
removing $v(i-1)$. For a word $v\in S^*$, by $\ell(v)$ we denote
the word length of the group element that $v$ represents.

We say that $G$ is
\emph{biautomatic} if there exists a regular language $\mathcal
L\subset S^*$ (see Section~\ref{sec:regularity} for the definition of regularity) and a constant $C>0$ satisfying the following conditions (see \cite[Lem~2.5.5]{E}).
\begin{enumerate}[(i)]
\item For each $g\in G$, there is a word in $\mathcal L$
    representing $g$.
\item For each $s\in S$ and $g,g'\in G$ with $g'=gs,$ and each $v,v'\in \mathcal L$
    representing $g,g'$, for
    all $i\geq 1$ we have $\ell(v(i)^{-1}v'(i))\leq C$.
\item For each $s\in S$ and $g,g'\in G$ with $g'=sg,$ and each $v,v'\in \mathcal L$
    representing $g,g'$, for
    all $i\geq 1$ we have $\ell(v(i)^{-1}s^{-1}v'(i))\leq C$.
\end{enumerate}

Our paper concerns the two following well-known open questions (see e.g.\ \cite[\S6.6]{FHT}).

\begin{question}
	\label{q:1}
	Are Coxeter groups biautomatic?
\end{question}

\begin{question}
	\label{q:2}
	Are groups acting properly and cocompactly on $2$-dimensional $\mathrm{CAT}(0)$ spaces biautomatic?
\end{question}

All Coxeter groups are known to be automatic (i.e.\ having a regular language satisfying (i) and (ii)) by \cite{BH}.
Biautomaticity has been established only in special cases: \cite{E} (Euclidean and hyperbolic), \cite{NiRe2003} (right-angled),
\cite{Bahls2006} and \cite{CaMu2005} (no Euclidean reflection triangles), \cite{Cap2009} (relatively hyperbolic).
\medskip

Question~\ref{q:2} is widely open. The assumption of $2$-dimensionality is essential, since recently Leary--Minasyan \cite{LeMi} constructed a group acting properly and cocompactly on a $3$-dimensional $\mathrm{CAT}(0)$ space that is not biautomatic. Even in the case of $2$-dimensional buildings, except right-angled and hyperbolic cases, the answer was known
only in particular instances, e.g.\  for many (but not all) proper cocompact actions on Euclidean buildings by \cite{GeSh1}, \cite{GeSh2},
\cite{CaSh1995}, \cite{Nos2000}, and \cite{Sw}.

\smallskip

To define a convenient language, we need the following.
Let $W$ be an arbitrary Coxeter group. For $g\in W$, we denote
by $T(g)\subseteq S$ the set of $s\in S$ satisfying $\ell(gs)<\ell(g)$.
By \cite[Thm~2.16]{R}, the group $\langle T(g)\rangle$ is finite. By $w(g)$ we denote the
longest element in $\langle T(g) \rangle$ (which is unique by \cite[Thm~2.15(iii)]{R}, and consequently it is an involution). Let $\Pi(g)=gw(g)$. By
\cite[Thm~2.16]{R}, we have $\ell(\Pi(g))+\ell(w(g))=\ell(g)$.

We define the \emph{standard language} $\mathcal L\subset S^*$ for $W$ inductively
in the following way. Let $v\in S^*$ be a word of length $n$.
If $v$ represents the identity element of $W$, then $v\in \mathcal L$ if and only if $v$ is the empty word. Otherwise,
let $g\in W$ be the group element represented by $v$ and let $k=\ell(w(g))$. We declare $v\in \mathcal L$ if and
only if $v(n-k)\in \mathcal L$ and $v(n-k+1,n)$ represents $w(g)$. In particular, $v(n-k)$ represents $\Pi(g)$. It follows inductively that $n=\ell(g)$. Such a language is called \emph{geodesic}. Note that the standard language satisfies part~(i) of the definition of biautomaticity.

The paths in $W$ formed by the words in the standard language generalise the normal cube paths for $\mathrm{CAT}(0)$ cube complexes \cite[\S3]{NR} used to prove biautomaticity for right-angled (or, more generally, cocompactly cubulated) Coxeter groups \cite{NiRe2003}. Our main result is the following.

\begin{thm}
\label{thm:main}
If $W$ is a $2$-dimensional Coxeter group, then
it is biautomatic with $\mathcal L$ the standard language.
\end{thm}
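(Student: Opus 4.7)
The plan is to verify the three defining conditions for biautomaticity with $\mathcal L$.

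Condition~(i) is essentially built into the inductive definition of $\mathcal L$: a straightforward induction on $\ell(g)$ produces a representative of every $g\in W$ by concatenating a standard representative of $\Pi(g)$ with any reduced expression of $w(g)$. (Since $w(g)$ usually has several reduced expressions, $\mathcal L$ typically contains several words per element.)

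For regularity I would build a finite-state automaton directly from the inductive definition. After reading a prefix $v(i)$ representing $g_i\in W$, the state would track the pair $(T(g_i),u_i)$, where $u_i\in\langle T(g_i)\rangle$ is the element spelled by the suffix of $v(i)$ lying inside the currently active layer. Since $S$ is finite and each finite parabolic $\langle A\rangle\le W$ has bounded cardinality, the state space is finite. On reading $s$ one checks whether $u_is$ is still a reduced prefix of some reduced expression of $w(g_i)$; if so we stay in the layer, and once $u_i$ equals $w(g_i)$ we commit the layer and start a new one in state $(T(g_is),s)$. The local computability needed is that this check depends only on $(T(g_i),u_i)$, which holds because $w(g_i)$ is determined by $T(g_i)$ alone.

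The heart of the theorem is the fellow-traveller property~(ii)--(iii). To each $v\in\mathcal L$ is attached the sequence of \emph{layer endpoints} $e=g_0,g_1,\ldots,g_m=g$ with $g_{j-1}=\Pi(g_j)$. The plan is to prove a local comparison lemma: if $v,v'\in\mathcal L$ represent $g$ and $gs$ with associated layer sequences $(g_j)$ and $(g'_j)$, then the indices on which they disagree form an interval at the top of bounded length $L$, and on that interval $\ell(g_j^{-1}g'_j)$ is bounded by a constant $C_0$ depending only on~$W$. Condition~(ii) follows at once, since below the disagreement interval the two words coincide letter by letter, while inside and above it the discrepancy spans at most $L$ layers, each of bounded length. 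Condition~(iii) for left multiplication should reduce to a symmetric statement via the involution $g\mapsto g^{-1}$ together with the fact that $T$ and $w$ interact well with inversion.

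The main obstacle is ruling out deep cascading of the $\Pi$-operation. A single change at the top, swapping $w(g)$ for $w(gs)$, perturbs $\Pi(g)$, which may alter $T(\Pi(g))$, hence $w(\Pi(g))$, and so on recursively; a priori this cascade could persist to arbitrary depth. This is where $2$-dimensionality is essential: the triangle condition $\tfrac{1}{m_{st}}+\tfrac{1}{m_{sr}}+\tfrac{1}{m_{tr}}\le 1$ forces every rank-$3$ parabolic of $W$ to be infinite, tightly constraining how consecutive finite parabolics $\langle T(g_j)\rangle$ can interlock. I expect the proof to translate the cascade into a geometric statement inside the $2$-dimensional Davis complex of $W$ and to terminate cascades by a planar argument: a sufficiently deep cascade would produce a configuration forcing a finite rank-$3$ parabolic, contradicting $2$-dimensionality. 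Once the cascade is bounded, the constant $C$ is assembled from the per-layer bounds together with the finiteness of $|S|$ and of the rank-$2$ layer lengths.
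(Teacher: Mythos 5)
Your plan has two genuine gaps, one in the regularity step and one in the fellow-traveller step. For regularity, the state $(T(g_i),u_i)$ is not enough to drive a finite automaton: it is true that $w(g_i)$ is determined by $T(g_i)$, but the transition is not locally computable, because $T(g_is)$ (and, after committing a layer spelling the longest element $w$ of $\langle T\rangle$, the new descent set $T(g_iw)$) is \emph{not} a function of $(T(g_i),u_i,s)$. Whether a letter $t$ is a descent of the new prefix depends on whether the wall dual to the corresponding edge separates that prefix from the identity, and this can be governed by walls far outside the current finite residue (this is exactly the content of Lemma~\ref{lem:append} and Remark~\ref{rem:WandR}). The paper's automaton therefore records, up to translation, a set of walls within a uniformly bounded distance of the current element, and the finiteness of the state set is guaranteed by the Parallel Wall Theorem (Theorem~\ref{thm:parallel}); some device of this kind (walls or elementary roots, as in Brink--Howlett) is unavoidable, and without it your transition rule is simply not well defined.

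For the fellow-traveller property your ``local comparison lemma'' asserts more than is true: for $g'=gs$ the layer sequences of $v$ and $v'$ need not agree below a bounded top interval, and since each group element has many representatives in $\mathcal L$ the two words certainly need not coincide letter by letter anywhere. What actually holds, and what the paper proves (Proposition~\ref{prop:main}), is that the discrepancy stays \emph{bounded at every level}: if $g'\in g\langle s,t\rangle$ with $m_{st}<\infty$, then after at most three applications of $\Pi$ on each side the images again lie in a common finite rank-$\leq 2$ residue, and iterating this all the way down gives condition~(ii) with constant $5K$ (Corollary~\ref{cor:fellow_travel}). Your intuition that $2$-dimensionality kills deep cascading via geometry in the Davis complex points in the right direction, but the entire mechanism -- the truncated piecewise Euclidean metric, the $\mathrm{CAT}(0)$ angle comparison of Lemma~\ref{lem:angles}, and the case analysis of Corollary~\ref{cor:step1} and Proposition~\ref{prop:main} -- is exactly the heart of the theorem and is not supplied. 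Finally, condition~(iii) does not reduce to (ii) via $g\mapsto g^{-1}$: the language is built from right descent sets and is not preserved (even coarsely) by word reversal; the paper proves (iii) separately (Lemma~\ref{lem:part3}), by induction on $\ell(g)$ using that $T(sg)=T(g)$ when $g$ is not adjacent to the wall of the $s$-edge at the identity, together with a hyperbolicity argument for $Q$-neighbourhoods of walls (Sublemma~\ref{sub}), which uses $2$-dimensionality in yet another way.
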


Since the standard language is geodesic and preserved by the automorphisms of~$W$ stabilising~$S$, by \cite[Thm~6.7]{Sw} we have the following immediate consequence.

\begin{cor}
\label{cor:buildings}
Let $G$ be a group acting properly and cocompactly on a building of type $W,$ where $W$ is a $2$-dimensional Coxeter group. Then $G$ is biautomatic.
\end{cor}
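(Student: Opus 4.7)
The plan is to verify that the hypotheses of \cite[Thm~6.7]{Sw} are satisfied and invoke that theorem directly. Świątkowski's result says that if $W$ is biautomatic via a regular geodesic language $\mathcal L\subseteq S^*$ that is invariant under the group of automorphisms of $W$ stabilising $S$ (the diagram automorphisms), then any group $G$ acting properly and cocompactly on a building of type $W$ is biautomatic. Thus it suffices to check three properties of the standard language $\mathcal L$: biautomaticity, the geodesic property, and invariance under diagram automorphisms.

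First, by Theorem~\ref{thm:main}, $\mathcal L$ provides a biautomatic structure on the $2$-dimensional Coxeter group $W$, so this hypothesis is immediate. Second, the paragraph introducing $\mathcal L$ already observes that every word $v\in\mathcal L$ of length $n$ satisfies $n=\ell(g)$, where $g$ is the element represented by $v$; this is exactly the geodesic property. So the only point that actually needs argument is the invariance under diagram automorphisms.

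For the third point, let $\phi\colon W\to W$ be an automorphism with $\phi(S)=S$. I would prove by induction on $\ell(g)$ that for every $v\in S^*$ representing $g\in W$, one has $v\in\mathcal L$ iff $\phi(v)\in\mathcal L$, where $\phi(v)$ is the word obtained by applying $\phi$ letter by letter. The key observation is that $\phi$ preserves word length, hence for any $g\in W$,
\[
T(\phi(g))=\{s\in S:\ell(\phi(g)s)<\ell(\phi(g))\}=\phi(T(g)),
\]
so $\langle T(\phi(g))\rangle=\phi(\langle T(g)\rangle)$, and consequently $w(\phi(g))=\phi(w(g))$ and $\Pi(\phi(g))=\phi(\Pi(g))$. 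With $k=\ell(w(g))=\ell(w(\phi(g)))$, the recursive definition $v\in \mathcal L \iff v(n-k)\in\mathcal L \text{ and } v(n-k+1,n) \text{ represents } w(g)$ is therefore preserved by $\phi$, completing the induction (the base case being the empty word).

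I do not anticipate any genuine obstacle: the content of the corollary is entirely packaged in Theorem~\ref{thm:main} and \cite[Thm~6.7]{Sw}, and the only verification left is the elementary observation that the inductive definition of $\mathcal L$ is expressed purely in terms of the Coxeter-theoretic data $(\ell, T, w, \Pi)$, all of which are manifestly natural under diagram automorphisms.
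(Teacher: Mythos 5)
Your proposal is correct and matches the paper's argument: the corollary is deduced by citing \cite[Thm~6.7]{Sw} after noting that the standard language is geodesic and invariant under automorphisms of $W$ stabilising $S$, together with Theorem~\ref{thm:main}. The only difference is that you spell out the routine induction showing $T(\phi(g))=\phi(T(g))$, $w(\phi(g))=\phi(w(g))$, and $\Pi(\phi(g))=\phi(\Pi(g))$, which the paper asserts without detail.
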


One element of our proof of Theorem~\ref{thm:main} is:

\begin{thm}
\label{lem:regular}
Let $W$ be a Coxeter group. Then its standard language is
regular.
\end{thm}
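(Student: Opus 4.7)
My plan is to construct a non-deterministic finite-state automaton $\mathcal{N}$ recognizing $\mathcal{L}$, establishing regularity via Kleene's theorem. The construction rests on two ingredients. First, the Brink--Howlett theorem~\cite{BH} furnishes a DFA $\mathcal{A}$ over $S$ recognizing the set of reduced words in $W$. From $\mathcal{A}$ I build a DFA $\mathcal{B}$ whose state, after reading a reduced word $v$ representing $g\in W$, determines the descent set $T(g)\subseteq S$: for each $s\in S$, a simple modification of $\mathcal{A}$ tests whether $vs$ is reduced (i.e., whether $s\notin T(g)$), and the product of the resulting $|S|$ deterministic recognizers yields $\mathcal{B}$. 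Second, for every spherical subset $T\subseteq S$ (i.e., $\langle T\rangle$ finite), the set of reduced expressions in $T^*$ for the longest element $w_T\in\langle T\rangle$ is trivially regular, recognized by a DFA whose states are the elements of $\langle T\rangle$, with transitions by right multiplication, initial state $e$ and accept state $w_T$.

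I then assemble $\mathcal{N}$ with states the pairs $(q,\sigma)$, where $q$ is a state of $\mathcal{B}$ and $\sigma$ is either a marker $\star$ (``between blocks'') or a pair $(T,h)$ with $T\subseteq S$ spherical and $h\in\langle T\rangle$ (``currently traversing a reduced expression for $w_T$, at progress $h$''). The start state $(q_0,\star)$ and every $(q,\star)$ are accepting, so the empty word is accepted. From $(q,\star)$ I allow $\epsilon$-transitions to $(q,(T,e))$ for each spherical $T$. From $(q,(T,h))$, reading $s\in T$ with $\ell_{\langle T\rangle}(hs)>\ell_{\langle T\rangle}(h)$, I transition to $(q',(T,hs))$ where $q\to q'$ in $\mathcal{B}$ on $s$, provided $q'$ is not a dead state (so the full word stays reduced in $W$). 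From $(q,(T,w_T))$ there is an $\epsilon$-transition to $(q,\star)$, allowed precisely when $q$ certifies $T(g)=T$.

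Unwinding the inductive definition of $\mathcal{L}$, $\mathcal{N}$ accepts $v$ iff $v=v_1\cdots v_m$ with each $v_i$ a reduced expression for $w(g_i)$ and $g_i$ the partial product of $v_1\cdots v_i$, i.e., iff $v\in\mathcal{L}$. The certification $T(g)=T$ at each block end is essential: for instance, in the dihedral group $W=D_8$ with $S=\{s,t\}$, $m_{st}=4$, the naive factorization $tst\cdot s$ of $tsts$ must be rejected, and is rejected because $T(tsts)=\{s,t\}\neq\{s\}$, while the valid decomposition is the single block $tsts$. Since $\mathcal{N}$ has finitely many states, $\mathcal{L}$ is regular. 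The principal obstacle is extracting the descent-tracking DFA $\mathcal{B}$, which rests on the nontrivial Brink--Howlett theorem; the remaining block-boundary bookkeeping is routine.
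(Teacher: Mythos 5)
Your proof is correct, but it takes a genuinely different route from the paper. The paper constructs its automaton from scratch out of wall data: the key step is Lemma~\ref{lem:append}, characterising when $T(gw)=T$ in terms of the set $\mathcal W(g)$ of innermost separating walls, and the states of the automaton in Definition~\ref{def:reg} are translation classes $[g,\mathcal U]$ with $\mathcal U$ a set of walls near $g$, finite in number thanks to the Parallel Wall Theorem (Theorem~\ref{thm:parallel}); the invariant maintained is that after reading $v\in\mathcal L$ representing $g$ the state is $[g,\mathcal W(g)]$, and whole blocks (reduced words for $w_T$) label single edges. You instead take the Brink--Howlett automatic structure \cite{BH} (regularity of the reduced-word language) as a black box, note that for any DFA recognising reduced words the state reached by a reduced word $v$ representing $g$ determines, for each $s$, whether $vs$ is reduced and hence determines $T(g)$, and then layer finite block-bookkeeping (progress $h\in\langle T\rangle$ toward $w_T$, with a descent-set check at each block boundary) on top; your unwinding of the inductive definition of $\mathcal L$ into block decompositions with $T_i=T(g_i)$ is exactly the right characterisation, and the completeness/soundness of the NFA follows. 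What each approach buys: yours is shorter and purely automata-theoretic, outsourcing all the Coxeter combinatorics to \cite{BH}, whereas the paper relies only on the Parallel Wall Theorem from \cite{BH} and produces states with geometric meaning ($\mathcal W(g)$), in the same spirit as, and reusing Lemma~\ref{lem:append} alongside, the wall arguments in the rest of the paper. Two small points to tighten in your write-up: the side condition ``$q'$ is not a dead state'' does not by itself certify that the prefix read so far is reduced for an arbitrary DFA (a non-reduced word can land in a state reachable by reduced words); require instead that $q'$ be an accept state of the reduced-word DFA (or use the minimal DFA), which is what your certification of $T(g)$ at block ends actually needs, and note that in your $m_{st}=4$ illustration the factorisation $tst\cdot s$ is already excluded because $tst$ is not a reduced expression of any $w_T$ --- the decomposition genuinely killed by the certification is $t\cdot s\cdot t\cdot s$.
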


In other words, the regularity and part (i) of the definition of biautomaticity are
satisfied for any Coxeter group $W$. However, it is not so
with part (ii). The \emph{$\widetilde A_3$ Euclidean group} is the Coxeter
group with
$S=\{p,r,s,t\}, m_{pr}=m_{rs}=m_{st}=m_{tp}=3,
m_{ps}=m_{rt}=2$.

\begin{thm}
\label{thm:second}
If $W$ is the $\widetilde A_3$ Euclidean group, then its
standard language does not satisfy part (ii) in the definition
of biautomaticity.
\end{thm}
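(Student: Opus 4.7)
The plan is to refute part~(ii) for $W=\widetilde{A}_3$ by producing, for each constant $C$, elements $g,g'=gs$ (with $s\in S$) whose standard language words $v,v'\in\mathcal L$ admit an index $i$ with $\ell\bigl(v(i)^{-1}v'(i)\bigr)>C$.

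I would begin by setting up the Euclidean realisation: $W$ acts by reflections on $\mathbb{E}^3$, chambers are translates $gC$ of the fundamental simplex $C$, and $\ell(g)$ equals the number of walls separating $C$ from $gC$. The four maximal finite parabolics are the three-generator subgroups $\langle S\setminus\{x\}\rangle\cong S_4$, each with longest element of length~$6$. When $T(g)=S\setminus\{x\}$, the chamber $gC$ sits behind its type-$x$ vertex $gv_x$ as seen from $C$, and $\Pi(g)C$ is the chamber antipodal to $gC$ in the star of $gv_x$. A direct manipulation gives $\Pi(g)^{-1}\Pi(gs)=w(g)\,s\,w(gs)$, of length at most $13$, so a single application of $\Pi$ cannot by itself violate fellow travelling; the failure must accumulate over many iterations.

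The main step is then to construct a family $g_n$ of length $\Theta(n)$ such that for every $j=0,1,\ldots,n$ both $T\bigl(\Pi^j(g_n)\bigr)$ and $T\bigl(\Pi^j(g_n s)\bigr)$ lie in a fixed alternating pair of maximal finite parabolics, say $\{p,r,s\}$ and $\{r,s,t\}$, with $g_n'=g_n t$. A natural seed is a modification of a power of a Coxeter element such as $c=prst$, chosen so that the prescribed descent pattern recurs under $\Pi$. The recursion $\Pi^j(g_n)^{-1}\Pi^j(g_n t)=w_1\cdot\Pi^{j-1}(g_n)^{-1}\Pi^{j-1}(g_n t)\cdot w_2$ then adds a fixed bounded correction at each step; because the corrections act as parallel Euclidean translations in $\mathbb{E}^3$ rather than cancelling rotations, their word lengths accumulate, and after $j\sim C$ iterations the prefix of $v_n'$ at index $\ell\bigl(\Pi^j(g_n)\bigr)$ is at word distance greater than $C$ from $\Pi^j(g_n)$, contradicting part~(ii).

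The principal obstacle is the recursive bookkeeping: verifying that the descent sets of the iterated $\Pi^j(g_n)$ remain in the prescribed alternating pair for $\Omega(n)$ steps, and that the per-step corrections point in a consistent direction rather than cancelling. This is where the non-$2$-dimensionality enters essentially: in rank three the vertex links are $2$-spheres on which $S_4$ acts, and the antipodal map can reorient chambers in geometrically nontrivial ways that support such accumulation, whereas in the $2$-dimensional setting the analogous flip across a dihedral vertex link is too rigid. I would handle the bookkeeping by explicit calculation in the root system of $\widetilde{A}_3$ or, equivalently, by tracking chamber walks in the Euclidean realisation.
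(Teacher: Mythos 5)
Your high-level strategy is the same as the paper's: realise $W=\widetilde A_3$ Euclideanly, take two adjacent elements $g$, $g'=gs$ deep in the group, and show that the paths of iterated projections $\Pi^j$ (which the words of $\mathcal L$ run through) drift apart linearly, so prefixes at equal indices are far apart. But as written there is a genuine gap: the entire quantitative core is deferred. You acknowledge that the ``principal obstacle'' is verifying that the descent sets of $\Pi^j(g_n)$ and $\Pi^j(g_n t)$ follow the prescribed pattern for $\Omega(n)$ steps and that the per-step corrections do not cancel --- but that verification \emph{is} the proof; nothing in the proposal establishes it. In particular, the recursion $\Pi^j(g)^{-1}\Pi^j(g')=w_1\cdot\Pi^{j-1}(g)^{-1}\Pi^{j-1}(g')\cdot w_2$ with bounded $w_1,w_2$ only gives an \emph{upper} bound $O(j)$ on the difference; the needed \emph{lower} bound (non-cancellation) is exactly what fails in the $2$-dimensional case treated in Sections~4--5, so it cannot be waved through by saying the corrections are ``parallel Euclidean translations.'' There is also an unaddressed synchronisation issue: part~(ii) compares $v(i)$ and $v'(i)$ at the \emph{same} letter index $i$, whereas your recursion compares $\Pi^j$ with $\Pi^j$; since $\ell(w(\cdot))$ generally differs along the two paths, these occur at different indices, and the reindexing is part of what must be controlled.

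Moreover, the specific structure you posit --- descent sets confined to an alternating pair of maximal rank-$3$ parabolics $\{p,r,s\}$, $\{r,s,t\}$ (each $\cong S_4$, longest element of length $6$), seeded by a modified power of a Coxeter element --- is speculative and is not what produces the phenomenon in the paper's example. There, the divergent pair consists of two incident segments in the $\mathbb{R}^3$ model with rank-$2$ descent sets: along one path $T$ generates an $A_2$ (so $\ell(w)=3$) and $e_{\Pi(g)}$ is the translate of $e_g$ by $(0,-1,-1)$, while along the other $T$ generates $A_1\times A_1$ (so $\ell(w)=2$) and the translate is by $(0,0,-1)$ (Lemma~\ref{lem:stepin A3}). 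Synchronising $i=3n=2n'$ then shows the two prefixes at index $i$ are at Euclidean, hence word, distance $\geq n$. To complete your argument you would need to (a) exhibit an explicit infinite family $g_n$, (b) compute $T(\Pi^j(g_n))$ and $T(\Pi^j(g_n t))$ for all relevant $j$ (by root-system or chamber bookkeeping, as the paper does in explicit coordinates), and (c) deduce a linear lower bound on $\ell\bigl(v(i)^{-1}v'(i)\bigr)$ at equal indices; none of these steps is carried out, and the proposed descent pattern may not even be realisable for an unbounded family.
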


Note, however, that by \cite[Cor~4.2.4]{E}, all Euclidean groups, in particular~$\widetilde A_3$, are biautomatic (with a different language).

\medskip

\noindent \textbf{Organisation.} In Section~\ref{sec:prel} we review the basic properties of Coxeter groups. In Section~\ref{sec:regularity} we prove Theorem~\ref{lem:regular}. For $2$-dimensional $W$, we verify parts~(iii) and~(ii) of the definition of biautomaticity in Sections~\ref{sec:partiii} and~\ref{sec:main}. This completes the proof of Theorem~\ref{thm:main}. We finish with the proof of Theorem~\ref{thm:second} in Section~\ref{sec:dim3}.

\medskip

\noindent \textbf{Acknowledgement.} We thank the referee for many helpful suggestions.

\section{Preliminaries}
\label{sec:prel}

By $X^1$ we denote the \emph{Cayley graph} of $W$, that is, the graph with vertex set $X^0=W$ and with edges joining
each $g\in W$ with $gs$, for $s\in S$. We call such an edge an
\emph{$s$-edge}. We call $gs$ the
\emph{$s$-neighbour} of $g$.

For $r\in W$ a conjugate of an element
of $S$, the \emph{wall} $\mathcal W_r$ of $r$ is the fixed point set of $r$ in $X^1$. We call $r$ the \emph{reflection} in $\mathcal W_r$ (for fixed $\mathcal W_r$ such $r$ is unique). If a midpoint of an edge $e$
belongs to a wall $\mathcal W$, then we say that $\mathcal W$ is \emph{dual} to~$e$ (for fixed~$e$ such a wall is unique).
We say that $g\in W$ is \emph{adjacent} to a wall~$\mathcal W$, if $\mathcal W$ is dual to an edge incident to $g$.
Each wall $\mathcal W$ separates $X^1$ into two components, and
a geodesic edge-path in $X^1$ intersects $\mathcal W$ at most once \cite[Lem~2.5]{R}.

For $T\subseteq S$, each coset $g\langle T\rangle\subseteq X^0$ for $g\in W$
is a \emph{$T$-residue}. A geodesic edge-path in $X^1$ with endpoints in a residue $R$ has all its vertices in $R$ \cite[Lem~2.10]{R}. We say that a wall $\mathcal W$ \emph{intersects}
a residue $R$ if $\mathcal W$ separates some elements of $R$. Equivalently, $\mathcal W$ is dual to an edge
with both endpoints in $R$.

\begin{thm}[{\cite[Thm~2.9]{R}}]
\label{thm:proj}
Let $W$ be a Coxeter group. Any residue $R$ of~$X^0$ contains a unique element $h$ with minimal $\ell(h)$. Moreover, for any $g\in R$ we have $\ell(h)+\ell(h^{-1}g)=\ell(g)$.
\end{thm}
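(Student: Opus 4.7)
The plan is to establish the length-additivity identity $\ell(hw) = \ell(h) + \ell(w)$ for any element $h$ of minimal length in $R$ and any $w \in \langle T\rangle$; both claims of the theorem then follow immediately. Uniqueness holds because two minimal-length representatives $h, h'$ differ by some $w = h^{-1}h' \in \langle T\rangle$, and then $\ell(h) = \ell(h') = \ell(h) + \ell(w)$ forces $\ell(w) = 0$, hence $h = h'$. The theorem's length formula for arbitrary $g \in R$ is the additivity identity applied with $w = h^{-1}g$, and existence of a minimal-length $h$ is trivial.

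I would prove length additivity by induction on $k = \ell(w)$; the base case $k = 0$ is vacuous. For the inductive step, fix a reduced expression $w = s_1 \cdots s_k$ with each $s_i \in T$, and set $w' = s_1 \cdots s_{k-1}$. By induction $\ell(hw') = \ell(h) + (k-1)$, so concatenating a reduced expression $h = t_1 \cdots t_n$ with $s_1 \cdots s_{k-1}$ yields a reduced expression for $hw'$. It suffices to show $\ell(hw' s_k) = \ell(hw') + 1$. Suppose for contradiction that $\ell(hw' s_k) < \ell(hw')$; then the exchange condition, applied to the reduced expression above for $hw'$ together with the letter $s_k$, produces a position whose deletion yields a reduced expression for $hw' s_k = hw$.

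If the deleted letter is some $s_j$ in the $w'$-block, cancelling $h$ on the left gives an expression of length $k - 2$ representing $w$, contradicting $\ell(w) = k$. Hence the deleted letter is some $t_j$ in the $h$-block, so $hw = t_1 \cdots \hat{t_j} \cdots t_n \cdot w'$. Setting $r = w' s_k (w')^{-1}$, a reflection conjugate to $s_k$ and therefore lying in $\langle T\rangle$, one obtains $hr = t_1 \cdots \hat{t_j} \cdots t_n$; hence $hr \in R$ with $\ell(hr) \leq n - 1 < \ell(h)$, contradicting minimality of $h$. The only potentially delicate step is the exchange-condition case analysis, but reducedness of $w$ kills the first case outright, and producing the shorter element $hr \in R$ via the conjugate reflection $r$ in the second case is the decisive contradiction.
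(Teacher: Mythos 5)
Your proposal is correct. Note that the paper does not prove this statement at all---it is quoted from Ronan's \emph{Lectures on buildings} (Thm~2.9)---so what you have written is a self-contained replacement for the citation, and it is the standard one: reduce everything to the length-additivity $\ell(hw)=\ell(h)+\ell(w)$ for $h$ minimal in $R=h\langle T\rangle$ and $w\in\langle T\rangle$, and prove that by induction using the exchange condition. Both branches of your case analysis are handled correctly: a deletion inside the $w'$-block contradicts $\ell(w)=k$ after cancelling $h$, and a deletion inside the $h$-block produces $hr\in R$ with $r=w's_k(w')^{-1}\in\langle T\rangle$ and $\ell(hr)<\ell(h)$, contradicting minimality. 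The only step you use silently is that $w\in\langle T\rangle$ admits a reduced $S$-expression all of whose letters lie in $T$ (equivalently $\ell_S=\ell_T$ on $\langle T\rangle$); this is itself standard and follows, e.g., by applying the deletion condition to any $T$-letter word for $w$, but it deserves a one-line mention. With that remark added, your argument is a complete and correct proof of the cited theorem, differing from the reference mainly in that Ronan works with galleries and the gate property of residues rather than directly with the exchange condition.
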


As introduced in Section~\ref{sec:intro}, for $g\in W$ we denote
by $T(g)\subseteq S$ the set of $s\in S$ satisfying $\ell(gs)<\ell(g)$.
Let $R$ be the $T(g)$-residue containing $g$.
By \cite[Thm~2.16]{R}, the group $\langle T(g)\rangle$ is finite and, for $w(g)$ the
longest element in $\langle T(g) \rangle$, the unique element $h\in R$ from Theorem~\ref{thm:proj} is $\Pi(g)=gw(g)$. In particular,
we have $\ell(\Pi(g))+\ell(w(g))=\ell(g)$. Note that if $W$ is $2$-dimensional, then for each $g\in W$ we have $|T(g)|=1$ or $2$. See Figure~\ref{f:f0} for an example where $S=\{s,t,r\}$ with $m_{st}=2, m_{sr}=m_{tr}=4$, and $g=strst$.

\begin{figure}[h!]
\begin{center}
\includegraphics[scale=0.6]{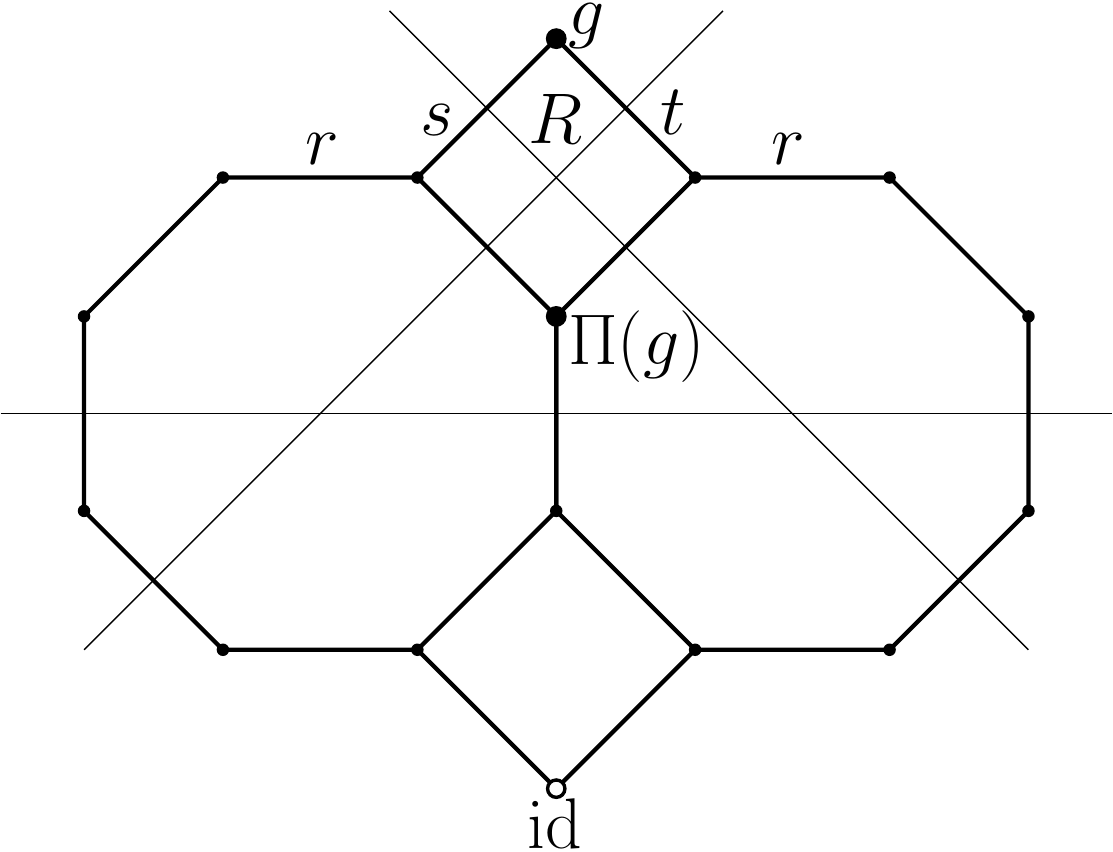}
\end{center}
\caption{$T(g)=\{ s,t\}$ and $w(g)=st=ts$.}
\label{f:f0}
\end{figure}

For $g\in W$, let $\mathcal W(g)$ be the set of walls $\mathcal W$ in $X^1$ that separate $g$ from the identity element $\id\in W$ and such that there is no wall $\mathcal W'$ separating $g$ from $\mathcal W$.

\begin{rem}
\label{rem:WandR}
Let $g\in W$ and let $R$ be the $T(g)$-residue containing $g$. Since $R$ is finite, all the walls intersecting $R$ belong to $\mathcal W(g)$. However, there might be walls in $\mathcal W(g)$ that do not intersect $R$. See Figure~\ref{f:f0} for an example, where we indicated all three walls of $\mathcal W(g)$ for $g=strst$.
\end{rem}

By the following Parallel Wall Theorem, there exists a bound on the distance in~$X^1$ between $g$ and each of the walls of $\mathcal W(g)$.

\begin{thm}[{\cite[Thm~2.8]{BH}}]
\label{thm:parallel}
Let $W$ be a Coxeter group. There is a constant $Q=Q(W)$ such that for any $g\in W$ and a wall $\mathcal W$ at distance $> Q$ from $g$ in $X^1$, there is a wall $\mathcal W'$ separating $g$ from $\mathcal W$.
\end{thm}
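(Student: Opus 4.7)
My plan is to follow Brink and Howlett's original strategy via the reflection representation. Fix the standard geometric representation of $W$ on $V=\R^S$ with basis $\{\alpha_s\}_{s\in S}$ and bilinear form $B(\alpha_s,\alpha_t)=-\cos(\pi/m_{st})$, under which each $s\in S$ acts as the reflection fixing $\alpha_s^\perp$. The positive root system $\Phi^+=W\cdot\{\alpha_s\}_{s\in S}\cap \R_{\geq 0}^S$ is in $W$-equivariant bijection with the reflections in $W$, hence with the walls of $X^1$. For each $w\in W$ the inversion set $N(w)=\{\alpha\in\Phi^+:w\alpha\in\Phi^-\}$ has cardinality $\ell(w)$ and records exactly which walls separate $\id$ from $w$.

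On $\Phi^+$ I would then introduce the dominance partial order: $\beta$ \emph{dominates} $\alpha$ if $u\beta\in\Phi^-$ forces $u\alpha\in\Phi^-$ for every $u\in W$. A short check translates this into containment of the associated half-spaces in $X^1$, so that if $\beta$ dominates $\alpha$, $\beta\neq\alpha$, and both correspond to walls separating $\id$ from $g$ (with the appropriate sign convention), then the wall $\mathcal{W}_\beta$ lies strictly between $g$ and $\mathcal{W}_\alpha$, furnishing exactly the wall the theorem asks for. Thus the hypothesis of the theorem can fail for $(g,\mathcal{W}_\alpha)$ only when $\alpha$ is dominance-minimal among the relevant roots in $N(g^{-1})$.

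The crux, and the step I expect to be the main obstacle, is the Brink--Howlett finiteness statement: there exists a constant $N(W)$ such that every positive root dominates only finitely many positive roots, with uniform bound $N(W)$. This is not formal: their argument proceeds by induction on the depth of a root (the minimal $k$ with $\alpha=s_1\cdots s_k\,\alpha_s$), analysing how dominance propagates under multiplication by simple reflections and how dihedral parabolic subgroups control the branching. Setting up the right invariants to push through the induction, and verifying the base case for roots of small depth, is the technical heart of the proof.

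Granted this finiteness, the theorem follows by a counting argument. For $Q$ chosen sufficiently large in terms of $N(W)$ and $|S|$, a wall $\mathcal{W}_\alpha$ with $d(g,\mathcal{W}_\alpha)>Q$ admits more than $N(W)$ candidate separating walls, coming from roots in $N(g^{-1})$ crossed by a minimal gallery from $g$ toward $\mathcal{W}_\alpha$. Since $\alpha$ cannot stand in the dominance relation with that many other roots, at least one of these intermediate roots must relate to $\alpha$ in the required way, and its wall supplies the desired $\mathcal{W}'$.
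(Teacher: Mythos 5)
The paper offers no proof of this statement---it is quoted directly from Brink--Howlett \cite[Thm~2.8]{BH}---so the question is whether your sketch faithfully reproduces their argument, and it does not. The crux you isolate is misstated. What Brink and Howlett prove is that the set of \emph{elementary} positive roots, i.e.\ those that dominate no other positive root, is finite; they do not prove, and it is false, that there is a uniform bound $N(W)$ on the number of positive roots that a given positive root dominates. Already for the infinite dihedral group the Cayley graph is a line, dominance is containment of the corresponding rays, and a root whose wall lies at distance $n$ from $\id$ dominates every root whose wall lies between that wall and $\id$ on the same side, so the quantity you want uniformly bounded grows linearly with depth. The induction on depth you plan to set up would therefore be aimed at a false target; the correct induction shows that any root of sufficiently large depth dominates \emph{some} root, equivalently that the dominance-minimal roots form a finite set.

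Even granting the correct finiteness theorem, your concluding counting argument runs in the wrong direction: from ``$\alpha$ stands in the dominance relation with at most $N(W)$ roots'' you infer that one of the many roots crossed by a gallery from $g$ toward $\mathcal W_\alpha$ must be related to $\alpha$, but scarcity of dominated roots makes such a conclusion harder, not easier, and a wall crossed by that gallery need not be disjoint from $\mathcal W_\alpha$, hence need not separate. No counting is needed. If no wall separates $g$ from $\mathcal W$, translate so that $g=\id$ and let $\alpha$ be the positive root of $\mathcal W$; strict dominance $\alpha\succ\beta$ forces the two walls to be disjoint and the half-space of $\alpha$ not containing $\id$ to be contained in that of $\beta$, so $\mathcal W_\beta$ would separate $\id$ from $\mathcal W_\alpha$. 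Hence $\alpha$ is elementary, and one takes $Q$ to be the maximum, over the finitely many elementary roots, of the distance from $\id$ to the corresponding wall. Finally, the genuinely hard step---the finiteness of the elementary roots---is left entirely as a plan in your write-up, so even after correcting the statement the proposal remains an outline of \cite{BH} rather than a proof.
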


By $X$ we denote the \emph{Cayley complex} of $W$. It is the
piecewise Euclidean $2$-complex with $1$-skeleton $X^1$, all
edges of length $1$, and a regular $2m_{st}$-gon spanned on
each $\{s,t\}$-residue with $m_{st}<\infty$. If $W$ is
$2$-dimensional, then $X$ is $\mathrm{CAT}(0)$, see \cite[\S II.5.4]{BHa} and the link condition in \cite[\S II.5.6]{BHa}. ($X$ coincides then with the `Davis complex' of $W$.)
Walls in $X^1$ extend to (convex) walls in $X,$ which still separate $X$.

We will consider the action of $G$ on $X^0=G$ by left multiplication. This induces obvious actions of $G$ on $X^1, X$ and the set of walls.
\section{Regularity}
\label{sec:regularity}

A \emph{finite state automaton over $S$} (FSA) is a finite directed graph $\Gamma$ with vertex set $V$, edge set $E\subseteq V\times V$, an edge labeling $\phi\colon E\to \mathcal P(S^*)$ (the power set of $S^*$), a distinguished set of \emph{start states} $S_0\subseteq V$, and a distinguished set of \emph{accept states} $F\subseteq V$. A word $v\in S^*$ is \emph{accepted by $\Gamma$} if there exists a decomposition $v=v_0\cdots v_m$ of $v$ into subwords and an edge-path $e_0\cdots e_m$ in $\Gamma$ such that $e_0$ has initial vertex in $S_0$, $e_m$ has terminal vertex in~$F$, and $v_i\in \phi(e_i)$ for each $i=0,\ldots,m$. A subset of $S^*$ is a \emph{regular language} if it is the set of accepted words for some FSA over $S$.

The proof of the regularity of the standard language relies on Theorem~\ref{thm:parallel} and the following lemma.

\begin{lemma}
\label{lem:append}
Let $W$ be a Coxeter group. Let $g\in W$, let $T\subseteq S$ be such that
$\langle T \rangle$ is finite, and let $w$ be the longest
element in $\langle T \rangle$. Then $T(gw)=T$ if and only if
\begin{enumerate}[(i)]
\item
$T$ is disjoint from $T(g)$, and
\item
for each
$t\in S\setminus T$,
the wall dual to $(gw,gwt)$ does not lie in $\mathcal W(g)$.
\end{enumerate}
\end{lemma}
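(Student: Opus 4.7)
The plan is to let $R = g\langle T\rangle = gw\langle T\rangle$ be the $T$-residue containing $g$ and $gw$, with minimum-length element $h$ provided by Theorem~\ref{thm:proj}. Since the restriction of $\ell$ to $\langle T\rangle$ equals $\ell_T$, every $hx\in R$ satisfies $\ell(hx) = \ell(h) + \ell_T(x)$, so the unique longest element of $R$ is $hw$. The condition $T(gw) \supseteq T$ says exactly that $gw$ is the longest element of $R$, i.e.\ $g = h$, i.e.\ $g$ is the minimum of $R$, i.e.\ $T(g) \cap T = \emptyset$, which is condition~(i).

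Now I assume~(i), so $g = h$. Fix $t \in S\setminus T$ and let $\mathcal{W}_t$ denote the wall dual to $(gw, gwt)$. First, $\mathcal{W}_t$ does not intersect $R$: any edge of $R$ dual to $\mathcal{W}_t$ would produce a reflection of the form $gxsx^{-1}g^{-1}$ (with $x\in \langle T\rangle$, $s\in T$) equal to $(gw)t(gw)^{-1}$, forcing $t = w^{-1}xsx^{-1}w \in \langle T\rangle \cap S = T$, contradiction. Hence $g$ and $gw$ lie on the same side of $\mathcal{W}_t$, so $t\in T(gw)$ if and only if $\mathcal{W}_t$ separates $\id$ from $g$. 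The lemma now reduces to showing this is equivalent to $\mathcal{W}_t \in \mathcal{W}(g)$.

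The ``$\Leftarrow$'' direction is the definition of $\mathcal{W}(g)$. For ``$\Rightarrow$'', suppose $\mathcal{W}_t$ separates $\id$ from $g$ and, for contradiction, that some wall $\mathcal{W}' \neq \mathcal{W}_t$ separates $g$ from $\mathcal{W}_t$. The midpoint of $(gw,gwt)$ lies on $\mathcal{W}_t$ and is connected to $gw$ by a half-edge, so any wall separating $gw$ from $\mathcal{W}_t$ must pass through this midpoint and thus equal $\mathcal{W}_t$; hence $gw$ lies on the $\mathcal{W}_t$-side of $\mathcal{W}'$, and $\mathcal{W}'$ separates $g$ from $gw$. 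In particular $\mathcal{W}'$ intersects $R$. Next, a geodesic from $\id$ to $g$ crosses $\mathcal{W}_t$ and arrives at a vertex adjacent to $\mathcal{W}_t$; by the same adjacency argument this vertex also lies on the $\mathcal{W}_t$-side of $\mathcal{W}'$, so the geodesic must subsequently cross $\mathcal{W}'$ to reach $g$. Thus $\mathcal{W}'$ separates $\id$ from $g$. But then the reflection across $\mathcal{W}'$ preserves $R$ and sends $g$ to an element of $R$ of strictly smaller length, contradicting the minimality of $g$ in $R$.

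The main obstacle is the final implication: showing that the hypothetical interposing wall $\mathcal{W}'$ must both intersect $R$ and separate $\id$ from $g$ --- two conclusions that both hinge on the adjacency of $gw$ to $\mathcal{W}_t$, and that jointly contradict~(i). The other steps reduce cleanly to Theorem~\ref{thm:proj}, the identity $\langle T\rangle \cap S = T$, and basic residue structure.
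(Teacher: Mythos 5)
Your argument is correct and follows essentially the same route as the paper's: condition~(i) is equated with minimality of $g$ in the $T$-residue via Theorem~\ref{thm:proj}, the conjugation argument (the paper's appeal to $S\cap\langle T\rangle=T$, via \cite[Lem~2.1(ii)]{R}) shows the wall dual to $(gw,gwt)$ cannot cross $R$, and the half-edge adjacency argument handles a hypothetical interposing wall. The only divergence is the endgame of the converse: you contradict minimality of $g$ in $R$ by reflecting across the interposing wall $\mathcal W'$ (using the standard fact that a reflection whose wall separates $\id$ from $g$ shortens $g$), whereas the paper contradicts $\ell(g)+\ell(w)=\ell(gw)$ directly --- both are fine.
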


Note that for $g\in W$ and $s\in S$, the wall dual to $(g,gs)$ lies in $\mathcal W(g)$ if and only if it separates $g$ from $\id$. Consequently, condition~(i) could be written equivalently as: for each
$t\in T$, the wall dual to $(g,gt)$ does not lie in $\mathcal W(g)$.

\begin{proof}[Proof of Lemma~\ref{lem:append}]
Suppose first $T(gw)=T$. Then, for $R$ the $T$-residue containing $gw$, by the discussion after Theorem~\ref{thm:proj}, the unique element $h\in R$ with minimal $\ell(h)$ is $g$. Thus for each $t\in T$ we
have $\ell(gt)> \ell(g)$ and so condition~(i) holds.
Furthermore, for $t\in S\setminus T$, the wall~$\mathcal W$
dual to the edge $(gw,gwt)$ does not separate $gw$
from $\id$. Additionally, the wall $\mathcal W$ cannot separate $gw$ from $g$: If it did, then after
conjugating by $(gw)^{-1}$, the reflection in $\mathcal W$ could become simultaneously the generator~$t$ and a word in the elements of $T$, contradicting $t\in S\setminus T$ by \cite[Lem~2.1(ii)]{R}. Thus $\mathcal W$ does not separate $g$ from $\id$, and so condition~(ii) holds.

Conversely, suppose that we have $T\subseteq S$ 
satisfying conditions~(i) and~(ii).
Then, by condition~(i), for $R$ the $T$-residue containing $g$, we have that the minimal word length element $h\in R$ from Theorem~\ref{thm:proj} coincides with $g$, and so the element of $R$ of maximal word length is $gw$. Consequently, we have $T(gw)\supseteq T$.
Suppose, for a contradiction, that there is
$t\in T(gw)\setminus T$. Then the wall $\mathcal W$
dual to the edge $(gw,gwt)$ separates $gw$ from $\id$. The same argument as in the previous paragraph implies that $\mathcal W$ does not separate $gw$ from~$g$, so it separates $g$ from $\id$. Furthermore, if a wall $\mathcal W'$ separated $\mathcal W$ from $g$, then $\mathcal W'$ would also have to separate $gw$ from $g$, contradicting $\ell(g)+\ell(w)=\ell(gw)$. Consequently, $\mathcal W\in \mathcal W(g)$, which contradicts condition~(ii).
\end{proof}

We now define an FSA $\Gamma$ over $S$ that will accept exactly the standard language.

\begin{defin}
\label{def:reg}
Let $Q$ be the constant from Theorem~\ref{thm:parallel}. For $g\in W$, let $\mathcal U_Q(g)$ be the set of walls in $X^1$ intersecting the closed ball in $X^1$ of radius $Q$ centred at~$g$. By Theorem~\ref{thm:parallel}, we have $\mathcal W(g)\subseteq \mathcal U_Q(g)$.

Consider the set $\hat V$ of pairs of the form $(g, \mathcal U)$, where $g\in W$, and $\mathcal U$ is a subset of $\mathcal U_Q(g)$.
We define an equivalence relation $\sim$ on $\hat V$ by $(g,\mathcal U)\sim (h, \mathcal U')$ if $\mathcal U'=hg^{-1}\mathcal U$. We take the vertices of our FSA $\Gamma$ to be $V=\hat V/\sim$. To lighten the notation, we denote the equivalence class of $(g,\mathcal U)$ by $[g,\mathcal U]$.

In any equivalence class of $\sim$, there is exactly one representative of the form $(\id, \mathcal U)$. Suppose that we have
$T\subseteq S$ such that $\langle T\rangle$ is finite. Let $w$ be the longest element of $\langle T\rangle$. If
\begin{enumerate}[(i)]
\item
for each $t\in T,$ the wall dual to $(\id,t)$ lies outside $\mathcal U$, and
\item
for each $t\in S\setminus T,$ the wall dual to $(w,wt)$ lies outside $\mathcal U$,
\end{enumerate}
then we put an edge $e$ in $\Gamma$ from $[\id, \mathcal U]$ to $[w, \mathcal U']$, where
$\mathcal U'$ is defined as the set of walls in $\mathcal U_Q(w)$ that
\begin{enumerate}[(a)]
\item
lie in $\mathcal U$ or intersect the residue $\langle T\rangle$, and
\item
are not separated from $w$ by a wall satisfying~(a).
\end{enumerate}

We let the label~$\phi(e)$ to be the set of all minimal length words representing~$w$.
We let all states be accept states of $\Gamma$ and let the set of start states $S_0$ contain only~$[\id, \emptyset]$.
\end{defin}

\begin{proof}[Proof of Theorem~\ref{lem:regular}]
Let $\Gamma$ be the FSA from Definition~\ref{def:reg}, and let $\mathcal L$ be the standard language.
We argue inductively on $j\geq 0$ that, among the words $v\in S^*$ of length $\leq j$,
\begin{itemize}
\item
$\Gamma$ accepts exactly the words in $\mathcal L$, and
\item
the accept state of each such word $v$ is $[g,\mathcal W(g)],$ where $v$ represents $g$.
\end{itemize}
This is true for $j=0$ by our choice of $S_0$. Now let $n>0$ and suppose that we have verified the inductive hypothesis for all $j<n$. Let $v$ be a word in $S^*$ of length $n$.

Suppose first that $v$ is a word in $\mathcal L$ representing $g\in W$. By the definition of~$\mathcal L$, for $k=\ell(w(g))$, we have $v(n-k)\in \mathcal L$. Moreover, $v(n-k+1,n)$ represents~$w(g)$. By the inductive hypothesis, $\Gamma$ accepts $v(n-k)$. Furthermore, $v(n-k)$ labels some edge-path in $\Gamma$ from $S_0$ to $[\Pi(g),\mathcal W(\Pi(g))]$.
Let $T=T(g)$ and $w=w(g)$. By Lemma~\ref{lem:append}, applied replacing $g$ with~$\Pi(g)$, we have that
\begin{enumerate}[(i)]
\item for each $t\in T$, the wall dual to $(\Pi(g),\Pi(g)t)$ does not lie in $\mathcal W(\Pi(g))$, and
\item for each $t\in S\setminus T,$ the wall dual to $(g,gt)$ does not lie in $\mathcal W(\Pi(g))$.
\end{enumerate}
Thus, translating by $\Pi(g)^{-1}$, we see that $\Gamma$ has an edge from $[\id,\Pi(g)^{-1}\mathcal W(\Pi(g))]=[\Pi(g),\mathcal W(\Pi(g))]$ to $[w,\mathcal U']=[g,\Pi(g)\mathcal U']$, labelled by $v(n-k+1,n)$, and so $\Gamma$ accepts~$v$. Furthermore,
by conditions~(a) and~(b) in Definition~\ref{def:reg}, we have that $\Pi(g)\mathcal U'$ consists of walls of $\mathcal U_Q(g)$ that lie in $\mathcal W(\Pi(g))$ or intersect the residue $g\langle T\rangle$ and are not separated from $g$ by any other such wall. Since $\mathcal W(g)\subseteq \mathcal U_Q(g)$, this implies $\Pi(g)\mathcal U'=\mathcal W(g)$.

Conversely, let $v$ be accepted by $\Gamma$ and suppose that $v=v_0\cdots v_m$ as in the definition of an accepted word.
By the inductive hypothesis, the word $v_0\cdots v_{m-1}$ belongs to $\mathcal L$ and represents $g\in W$ such that $e_m$ starts at
$[g,\mathcal W(g)]=[\id, g^{-1}\mathcal W(g)]$. By the definition of the edges, $v_{m}$ represents the longest element~$w$ in some finite~$\langle T \rangle$, and $\mathcal U=g^{-1}\mathcal W(g)$ satisfies conditions~(i) and~(ii) in Definition~\ref{def:reg}.
Translating by $g$, we obtain that $g$ and $T$ satisfy conditions~(i) and~(ii) of Lemma~\ref{lem:append}. Consequently, we have $T=T(gw)$, and so $v$ belongs to~$\mathcal L$.
\end{proof}

\section{$g$ and $sg$}
\label{sec:partiii}
\begin{lemma}
\label{lem:part3}
Let $W$ be a $2$-dimensional Coxeter group. Then its standard language satisfies part (iii) of the definition of biautomaticity.
\end{lemma}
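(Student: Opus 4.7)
The plan is to prove fellow-travelling by strong induction on $\ell(g)+\ell(g')$ where $g'=sg$, by comparing the recursive block structures of standard words $v\in\mathcal L$ representing $g$ and $v'\in\mathcal L$ representing $g'$. The first step analyses how $T(g)$ relates to $T(g')$: for each $t\in S$, let $\mathcal W_t$ denote the wall dual to $(g,gt)$. Then $t\in T(g)$ iff $\mathcal W_t$ separates $g$ from $\id$, whereas $t\in T(g')$ iff the wall $s\mathcal W_t$ dual to $(g',g't)$ separates $g'$ from $\id$, equivalently iff $\mathcal W_t$ separates $g$ from $s$. Since $\mathcal W_0$, the wall dual to $(\id,s)$, is the unique wall separating $\id$ from $s$, one obtains $T(g)=T(g')$ unless $g$ is adjacent in $X^1$ to $\mathcal W_0$ via a simple-generator edge, in which case $T(g)$ and $T(g')$ differ by exactly the single element $t=g^{-1}sg\in S$.

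In the generic case where $g$ is not adjacent to $\mathcal W_0$, we have $T(g)=T(g')$, $w(g)=w(g')$, and $\Pi(g')=s\Pi(g)$. Writing $v=v_0\cdot u$ and $v'=v'_0\cdot u'$ with $v_0,v'_0\in\mathcal L$ representing $\Pi(g),\Pi(g')$ and $u,u'$ minimal-length words for the common element $w(g)$, the inductive hypothesis applied to the pair $(\Pi(g),s\Pi(g))$ yields fellow-travelling for $v_0,v'_0$ with a uniform constant. By 2-dimensionality $|T(g)|\leq 2$, so $\langle T(g)\rangle$ is dihedral of order at most $2M$ where $M:=\max\{m_{st}:m_{st}<\infty\}$; hence $|u|,|u'|\leq M$, and all prefixes of minimal-length words for a dihedral element stay inside a bounded dihedral residue. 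This extends the inductive fellow-travelling from $v_0,v'_0$ to $v,v'$ with a bounded additive overhead.

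In the exceptional case, when $g$ is adjacent to $\mathcal W_0$, the sets $T(g)$ and $T(g')$ differ by one element, so the last blocks $w(g),w(g')$ are genuinely distinct and the inductive hypothesis does not apply directly to $(\Pi(g),\Pi(g'))$. A direct computation gives $\Pi(g)^{-1}\Pi(g')=w(g)^{-1}t w(g')\in\langle T(g)\cup T(g')\rangle$, and since $|T(g)\triangle T(g')|\leq 1$ we have $|T(g)\cup T(g')|\leq 2$, so the ambient group is finite dihedral of diameter at most $2M$; in particular $d(\Pi(g),\Pi(g'))$ is uniformly bounded. The plan here is to perform an explicit combinatorial analysis within the rank-$\leq 2$ residues $R_g,R_{g'}$ containing $g,g'$ in the planar Davis complex $X$, exploiting the fact that $\mathcal W_0$ meets both residues, and to modify the block choices in $v,v'$ within a bounded neighbourhood to synchronize their crossings of $\mathcal W_0$, thereby reducing to the inductive generic setup with a bounded additional overhead.

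The main obstacle will be the exceptional case: although the group-theoretic distance between $\Pi(g)$ and $\Pi(g')$ is bounded, their standard-word representations $v_0,v'_0$ may diverge globally in their recursive histories, and reducing to the inductive hypothesis requires a careful use of the planar geometry of $X$. The 2-dimensionality of $W$ is essential throughout: it bounds the rank-2 residues uniformly by $2M$, makes $X$ a $2$-dimensional $\mathrm{CAT}(0)$ complex amenable to explicit wall-combinatorial analysis, and constrains the local structure around the single crossing of $\mathcal W_0$ by $v$ and $v'$.
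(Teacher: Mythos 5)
Your first two steps are sound and coincide with the paper's argument: when $g$ is not adjacent to the wall $\mathcal W_0$ dual to $(\id,s)$, the wall analysis gives $T(g)=T(sg)$, hence $w(g)=w(sg)$ and $\Pi(sg)=s\Pi(g)$, and induction plus the uniform bound on the length of the final block finishes that case. The genuine gap is the exceptional case (where $g$ is adjacent to $\mathcal W_0$, i.e.\ $sg=gt$ for some $t\in S$), for which you offer only a ``plan''. Two things go wrong with it. First, part~(iii) quantifies over \emph{all} words of $\mathcal L$ representing $g$ and $sg$, so you may not ``modify the block choices in $v,v'$ \dots to synchronize their crossings of $\mathcal W_0$''. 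Second, and more seriously, the observation that $\ell\big(\Pi(g)^{-1}\Pi(g')\big)$ is uniformly bounded does not reduce you to your inductive hypothesis: here $\Pi(g')$ differs from $\Pi(g)$ by \emph{right} multiplication by an element of the finite residue $g\langle T(g')\rangle$, not by left multiplication by a single generator, so the induction (which is precisely part~(iii) for shorter elements) does not apply. Comparing prefixes of $\mathcal L$-words that represent two elements at bounded distance is essentially the part~(ii) fellow-traveller property for finite residues, which is the hard content of the paper (Proposition~\ref{prop:main}) and genuinely fails without $2$-dimensionality (Theorem~\ref{thm:second} for $\widetilde A_3$); so ``bounded distance plus bounded local repair'' cannot by itself carry this case, and your own last paragraph concedes that the recursive histories of $v_0,v'_0$ may diverge.

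What is missing is a concrete mechanism, and the paper supplies one that your outline does not contain: since $(g,sg)$ is an edge dual to $\mathcal W_0$, the paths $s\cdot v$ and $v'$ are two geodesic edge-paths from $\id$ to $f=sg$, and both $\id$ and $f$ are adjacent to $\mathcal W_0$. By the Parallel Wall Theorem (Theorem~\ref{thm:parallel}) every geodesic from $\id$ to $f$ stays within uniform distance $Q$ of $\mathcal W_0$; because $W$ is $2$-dimensional, the wall is a convex tree in the $\mathrm{CAT}(0)$ Davis complex, so its $Q$-neighbourhood is Gromov-hyperbolic, and stability of quasigeodesics yields a uniform constant $D$ with $\ell\big(v(i)^{-1}sv'(i+1)\big)<D$ (Sublemma~\ref{sub}). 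This argument applies to arbitrary geodesics and thus bypasses the recursive structure of $\mathcal L$ entirely, requiring no induction in the adjacent case. Until you either prove such a statement or first establish part~(ii) and quote it, the exceptional case -- which you correctly identify as the main obstacle -- remains unproven.
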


We will need the following.

\begin{sublemma}
\label{sub}
Let $W$ be a $2$-dimensional Coxeter group.
There is a constant $D=D(W)$ such that for any wall $\mathcal W$ adjacent to $\id$, any $f\in W$ adjacent to $\mathcal W$, and any vertices $h,h'\in W$ on geodesic edge-paths from $\id$ to $f$ satisfying $\ell(h)=\ell(h')$, we have $\ell(h^{-1}h')< D$.
\end{sublemma}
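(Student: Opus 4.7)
The plan is, in two steps, first to show that every geodesic edge-path from $\id$ to $f$ stays in a bounded combinatorial neighbourhood of $\mathcal W$, and then to use $\ell(h)=\ell(h')$ to bound $\ell(h^{-1}h')$.

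For the first step, I would prove the existence of a constant $N=N(W)$ such that every vertex $v$ on any geodesic edge-path from $\id$ to $f$ satisfies $d_{X^1}(v,\mathcal W)\leq N$. Suppose for contradiction that $d_{X^1}(v,\mathcal W)$ is large. Iterating Theorem~\ref{thm:parallel} produces many walls separating $v$ from $\mathcal W$; since at most $2|S|$ of them can be adjacent to $\id$ or to $f$, some separating wall $\mathcal W'$ is adjacent to neither. As $\id$ and $f$ lie at CAT(0) distance $1/2$ from $\mathcal W$, a triangle-inequality argument in the CAT(0) metric (using that $\mathcal W'$ is disjoint from $\mathcal W$ and non-adjacent to $\id, f$) places both $\id$ and $f$ on the $\mathcal W$-side of $\mathcal W'$. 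Consequently $\mathcal W'$ separates $v$ from both $\id$ and $f$, forcing the geodesic $\id\to v\to f$ to cross $\mathcal W'$ twice, contradicting the fact (\cite[Lem~2.5]{R}) that geodesic edge-paths cross each wall at most once.

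For the second step, observe first that any wall $\mathcal W''$ separating $h$ from $h'$ must lie in the set $\mathcal W_{\id f}$ of walls separating $\id$ from $f$: otherwise $\id$ and $f$ lie on the same side of $\mathcal W''$, hence so do all vertices on any geodesic between them, including $h$ and $h'$, a contradiction. Writing $A,A'\subseteq \mathcal W_{\id f}$ for the subsets separating $\id$ from $h, h'$ respectively, we have $\ell(h^{-1}h')=|A\triangle A'|$ with $|A|=|A'|=\ell(h)$. By Step~1, every wall in $A\triangle A'$ meets the $N$-neighbourhood of $\mathcal W$. Using the CAT(0) projection $\pi_{\mathcal W}$ onto the convex subspace $\mathcal W$, the combinatorial geodesics project to quasi-geodesics in $\mathcal W$ that lie near the CAT(0) segment $[\pi_{\mathcal W}(\id),\pi_{\mathcal W}(f)]$, and the condition $\ell(h)=\ell(h')$ pins $\pi_{\mathcal W}(h)$ and $\pi_{\mathcal W}(h')$ to approximately the same location along this segment. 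The bounded local geometry of the $2$-dimensional complex $X$ then yields $\ell(h^{-1}h')<D$ for some $D=D(W)$.

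The main obstacle is the final argument of Step~2: converting the closeness of the projections $\pi_{\mathcal W}(h),\pi_{\mathcal W}(h')$ in $\mathcal W$ into a bound on $\ell(h^{-1}h')$. Combinatorial and CAT(0) geodesics in $X$ may diverge, so one cannot simply project a combinatorial geodesic to a CAT(0) geodesic in $\mathcal W$; instead one must combine the uniform closeness to $\mathcal W$ from Step~1, the equal-length constraint, and the bounded local combinatorics of walls in a $2$-dimensional Coxeter complex to rule out large ``parallel'' deviations of the two geodesics within the $N$-neighbourhood of $\mathcal W$.
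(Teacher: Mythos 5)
Your Step~1 is essentially the paper's first paragraph and is fine, though it can be streamlined: a wall $\mathcal W'$ separating a vertex $g$ of $\gamma$ from $\mathcal W$ cannot be dual to the $\mathcal W$-edges at $\id$ or $f$ (an edge has a unique dual wall), so $\id$ and $f$ automatically lie on the $\mathcal W$-side of $\mathcal W'$ and $\gamma$ would cross $\mathcal W'$ twice; no counting of adjacent walls and no CAT(0) triangle inequality is needed, and a single application of Theorem~\ref{thm:parallel} suffices. The genuine gap is in Step~2, in two places. First, the assertion that the projected paths are quasi-geodesics of $\mathcal W$ ``lying near the CAT(0) segment'' is a quasi-geodesic stability statement, and it is valid only because $\mathcal W$ is Gromov-hyperbolic; here this holds because in a $2$-dimensional complex every wall is a convex \emph{tree}. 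You never invoke this, yet it is exactly where $2$-dimensionality enters: for a flat wall (as in $\widetilde A_3$) both this claim and the sublemma's conclusion fail --- two equal-length geodesics hugging a Euclidean plane can diverge linearly --- so ``bounded local combinatorics'' cannot substitute for this input. (The paper makes the corresponding move by noting that the $Q$-neighbourhood $N(\mathcal W)$ is quasi-isometric to the tree $\mathcal W$, hence hyperbolic, and applying the stability of quasi-geodesics \cite[III.H.1.7]{BHa}.)

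Second, the ``pinning'' claim --- that $\ell(h)=\ell(h')$ forces $\pi_{\mathcal W}(h)$ and $\pi_{\mathcal W}(h')$ to lie at approximately the same point of the segment --- is asserted without proof and does not follow from the ingredients you list: the position of $\pi_{\mathcal W}(h)$ along the segment is controlled (up to additive error) by the CAT(0) distance $|\id,h|$, which is comparable to $\ell(h)$ only up to \emph{multiplicative} constants, so equal word lengths give no additive control on the two projections. Note also that you locate the difficulty in the wrong place: once the pinning were known, bounding $\ell(h^{-1}h')$ would be immediate from the triangle inequality through $\mathcal W$ (both $h,h'$ lie within a bounded distance of their projections) together with the quasi-isometry of $X$ and $X^1$; that ``conversion'' is not the obstacle. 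The paper avoids the multiplicative-constant issue as follows: stability produces a vertex $h''$ on the \emph{other} geodesic $\gamma'$ at bounded CAT(0) distance from $h$; passing to the word metric gives $\ell(h^{-1}h'')<\frac{D}{2}$, hence $|\ell(h)-\ell(h'')|<\frac{D}{2}$; and since $h''$ and $h'$ both lie on $\gamma'$, a geodesic issuing from $\id$, their word lengths equal their positions along $\gamma'$, so $\ell(h''^{-1}h')<\frac{D}{2}$ and $\ell(h^{-1}h')<D$. Some argument of this kind (which works just as well after your projection to $\mathcal W$) is needed to complete your Step~2; as written, it is a gap.
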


\begin{proof}
Let $Q=Q(W)$ be the constant from Theorem~\ref{thm:parallel}. Suppose
that $h,h'\in W$ lie on geodesic edge-paths~$\gamma,\gamma'$ from $\id$ to $f$ and satisfy $\ell(h)=\ell(h')$.
Note that each vertex $g\in W$ of $\gamma$ lies at distance $\leq Q$ from $\mathcal W$ in $X^1$,
since otherwise there would be a wall $\mathcal W'$ separating $g$
from~$\mathcal W$, and so $\mathcal W'$ would intersect $\gamma$
at least twice.

Since $W$ is $2$-dimensional, we have that $X$ is a $\mathrm{CAT}(0)$ space, with path-metric that we denote $|\cdot, \cdot|$,
and the extension of~$\mathcal W$ to $X$ (for which we keep the same notation) is a convex tree. Let $x,y\in \mathcal W$ be the midpoints of the edges dual to $\mathcal W$ incident to $\id,f$, respectively. Let $N(\mathcal W)$ be the closed $Q$-neighbourhood of $\mathcal W$ in $X$, w.r.t.\ the $\mathrm{CAT}(0)$ metric. Note that $N(\mathcal W)$ is quasi-isometric to $\mathcal W$, so in particular $N(\mathcal W)$ is Gromov-hyperbolic (for definition, see e.g.\ \cite[III.H.1.1]{BHa}). Moreover, since $X$ and $X^1$ are quasi-isometric, we have that $\gamma\subset N(\mathcal W)$ is a $(\lambda,\epsilon)$-quasigeodesic (for definition, see \cite[I.8.22]{BHa}), where the constants $\lambda,\epsilon$ depend only on $W$. Consequently, by the stability of quasi-geodesics \cite[III.H.1.7]{BHa}, for a constant $C=C(W)$, there is a point $z$ on the geodesic from $x$ to $y$ with $|h,z|\leq C$. Analogously, there is a vertex $h''$ on $\gamma'$ with $|z,h''|\leq C$, and so $|h,h''|\leq 2C$.

Thus, since $X$ and $X^1$ are quasi-isometric, there is a constant $D=D(W)$ with $\ell(h^{-1}h'')< \frac{D}{2}$. By the triangle inequality in $X^1$, we have $|\ell(h)-\ell(h'')|< \frac{D}{2}$. Thus, by $\ell(h)=\ell(h')$, the distances on $\gamma'$ from $h''$ and $h'$ to $\id$ differ by less than $\frac{D}{2}$. Consequently,
we have $\ell(h''^{-1}h')< \frac{D}{2}$, and so $\ell(h^{-1}h')< D$, as desired.
\end{proof}

\begin{proof}[Proof of Lemma~\ref{lem:part3}] Let $\mathcal L$ be the standard language. Let $D$ be the constant from Sublemma~\ref{sub}. Let $K$ be the maximal word length of the longest element of a finite
$\langle T\rangle$ over all $T\subseteq S$, and let $C=\max\{K,D\}$.

We prove part~(iii) of the definition of biautomaticity
inductively on $\ell(g)$, where we assume without loss of generality
$\ell(sg)>\ell(g)$. If $g=\id$, then there is nothing to prove.
Suppose now $g\neq \id$, and let $\mathcal W$ be the wall in $X^1$ dual to the $s$-edge incident to $\id$. Let $v,v'\in \mathcal L$ represent $g,sg,$ respectively.

Assume first that $g$ is not adjacent to $\mathcal W$. Let $\mathcal W'$ be a wall adjacent to $g$ separating $g$ from $\id$. Then $\mathcal W'$
also separates $g$ from~$s$. Consequently, $s\mathcal W'$
separates $sg$ from $\id$. Conversely, if a wall $\mathcal W'$ is adjacent to $sg$ and separates $sg$ from $\id$, then it also
separates $sg$ from $s$, and so $s\mathcal W'$ separates $g$ from $\id$. Consequently,
$T(sg)=T(g)$ and so $w(g)=w(sg)$, hence $\Pi(sg)=s\Pi(g)$. In
other words, for $k=\ell(w(g))$, the
words $v'(\ell(sg)-k)$ and $sv(\ell(g)-k)$ represent the same element $s\Pi(g)$ of $W$. Then
part (iii) of the definition of biautomaticity for $g$ follows inductively
from part~(iii) for $\Pi(g)$, for $i< \ell(sg)-k$, or from the definition of $K$, for $i\geq \ell(sg)-k$.

Secondly, assume that $g$ is adjacent to $\mathcal W$. Then $(g,sg)$ is an edge of
$X^1$. Let $f=sg$ and for $0\leq i\leq \ell(g)$ let $h,h'$ be the elements of $W$
represented by $sv(i)$ and $v'(i+1)$. Then, by the definition of
$D$, we have $\ell(v(i)^{-1}sv'(i+1))<D$, as desired.
\end{proof}

\section{$g$ and $gs$}
\label{sec:main}

For $g\in W$ and $k\geq 0$, we set $\Pi^k(g)=\overbrace{\Pi\circ
\cdots \circ\Pi}^{k}(g)$. The main result of this section is the following.

\begin{prop}
\label{prop:main}
Let $W$ be a $2$-dimensional Coxeter group.
Let $g,g'\in W$ be such that $g'\in g \langle s,t \rangle$ for
some $s,t\in S$ with $m_{st}<\infty$ (possibly $s=t$). Then there are $0\leq k,k'\leq 3$ with
$k+k'>0$, such that $\Pi^{k'}(g')\in \Pi^k(g)\langle p,r\rangle$
for some $p,r\in S$ with $m_{pr}<\infty$ (possibly $p=r$).
\end{prop}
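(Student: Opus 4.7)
The plan is a case analysis based on how the descent sets $T(g)$ and $T(g')$ relate to the generator pair $\{s,t\}$ defining the common dihedral residue $R := g\langle s,t\rangle = g'\langle s,t\rangle$. Let $h_0$ denote the unique minimum-length element of $R$ from Theorem~\ref{thm:proj}, and write $g = h_0 u$, $g' = h_0 u'$ with $u,u' \in \langle s,t\rangle$ satisfying $\ell(g) = \ell(h_0) + \ell(u)$.

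First I would handle the easy cases. The crucial observation is that $\Pi(g) \in R$ if and only if $w(g) \in \langle s,t\rangle$, equivalently $T(g) \subseteq \{s,t\}$. When $\Pi(g) \in R$, we have $\Pi(g)\langle s,t\rangle = R \ni g'$, so $(k,k') = (1,0)$ with $(p,r) = (s,t)$ suffices; symmetric reasoning handles $\Pi(g') \in R$. The degenerate possibility $s = t$ (in which $R$ has at most two elements) and the case where $g$ or $g'$ is the maximum element of $R$ (in which $T(\cdot)$ contains $\{s,t\}$, hence equals $\{s,t\}$ by $2$-dimensionality, and $\Pi$ sends the maximum directly to $h_0 \in R$) both fall under this umbrella.

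The hard case is thus $T(g) = \{a,p\}$ and $T(g') = \{a',p'\}$ with $a,a' \in \{s,t\}$ but $p,p' \in S \setminus \{s,t\}$. In this situation $g$ is the maximum of the dihedral residue $R' := g\langle a,p\rangle$, so $\Pi(g)$ is its minimum; analogously $\Pi(g')$ is the minimum of $R'' := g'\langle a',p'\rangle$. In the $\mathrm{CAT}(0)$ Cayley complex $X$, the polygons $R', R''$ meet $R$ along the edges $(g, ga)$ and $(g', g'a')$ respectively, and $\Pi$ moves $g, g'$ along the diagonals of $R', R''$. My aim would be to show that after at most two further descent steps on each side, $\Pi^k(g)$ and $\Pi^{k'}(g')$ land in a common dihedral residue. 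The main lever here is that $R', R''$ emanate from $R$ at neighbouring (or identical) edges, so the $2$-dimensional link condition constrains how the subsequent descent sets $T(\Pi(g))$ and $T(\Pi(g'))$ can diverge.

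The main obstacle I foresee is precisely this last step: verifying that the bound of three iterations always suffices in the hard case. This will require tracking the walls in $\mathcal W(\Pi(g))$ and $\mathcal W(\Pi(g'))$ and how they rearrange after each descent, using the wall characterisation of $T(\cdot)$ furnished by Lemma~\ref{lem:append} together with the convexity of walls in the $\mathrm{CAT}(0)$ Cayley complex. The budget $k,k' \le 3$ is presumably tight, suggesting the worst configuration (likely when $p \ne p'$ and $R', R''$ meet $R$ at distinct but adjacent edges) requires the full three descent steps.
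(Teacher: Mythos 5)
Your easy-case reductions are fine and match the paper's opening moves: if $T(g)\subseteq\{s,t\}$ or $T(g')\subseteq\{s,t\}$ (including $s=t$ and the extremes of the residue $R$), one of $\Pi(g),\Pi(g')$ stays in $R$ and you are done immediately. But the statement you label as the "hard case" --- that when $T(g)=\{a,p\}$, $T(g')=\{a',p'\}$ with $p,p'\notin\{s,t\}$, at most three descent steps on each side land $\Pi^k(g)$ and $\Pi^{k'}(g')$ in a common finite residue --- is the entire content of the proposition, and you explicitly leave it unproved, offering only the plan of "tracking walls" via Lemma~\ref{lem:append} and convexity of walls in the $\mathrm{CAT}(0)$ Cayley complex. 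There is no indication of why those tools would yield a \emph{uniform} bound of $3$; in particular, nothing in your sketch controls how far $g$ can sit from the minimum $h_0$ inside $R$, nor what the labels $m_{ap}$ can be, and these are exactly the quantities that must be pinned down.

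The missing idea in the paper is a comparison-geometry mechanism that your outline does not contain: one replaces the usual piecewise Euclidean metric on the Davis complex by a \emph{truncated} one (where a $2m$-gon is remetrized as a $2q(m)$-gon with $q(m)\le 6$), checks the link condition (Lemma~\ref{lem:truncated}), and then uses reduced disc diagrams and the $\mathrm{CAT}(0)$ angle comparison to prove the angle inequality $\theta_1+\theta_3<\pi$ of Lemma~\ref{lem:angles}. This is what gives Corollary~\ref{cor:step1}: the distance $m$ from $g$ to $h_0$ along $R$ satisfies $m\le 3$, with sharp constraints such as $m_{sr}=2$ when $m=3$, and the exclusion $m\ne m_{st}-1$ unless $m_{st}\le 3$. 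The truncation is essential --- with the untruncated angles $\tfrac{\pi}{m_{st}}$ the bound $m\le 3$ fails to follow for large $m_{st}$, so a naive convexity/wall-tracking argument of the kind you propose does not obviously close. Only with these quantitative constraints in hand does the paper's four-case analysis (distinguishing $m=3$, $m=2$ with $m_{sr}\ge 3$ or $=2$, and the two subcases of $m=m'=1$ according to whether the second letters of $T(g),T(g')$ agree) verify the budget $k,k'\le 3$, which is indeed attained. As it stands, your proposal is an accurate road map of the reduction but has a genuine gap precisely at the step that requires the new geometric input.
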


We obtain the following consequence, which together with Theorem~\ref{lem:regular} and Lemma~\ref{lem:part3} completes the proof of Theorem~\ref{thm:main}.

\begin{cor}
\label{cor:fellow_travel}
Let $W$ be a $2$-dimensional Coxeter group. Then its standard language satisfies part (ii) of the definition of
biautomaticity.
\end{cor}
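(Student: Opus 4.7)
The plan is to deduce the fellow-traveller property from Proposition~\ref{prop:main} by iteratively coupling the $\Pi$-orbits of $g$ and $g' = gs$ through a chain of common spherical residues, and then controlling $\ell(v(i)^{-1} v'(i))$ by a triangle-inequality argument through synchronised anchor points.

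Let $K$ denote the maximal word length of the longest element of a finite $\langle T\rangle$ over all $T \subseteq S$. Starting from $(k_0, k'_0) = (0, 0)$, I note that $g' = gs \in g\langle s\rangle$ meets the hypothesis of Proposition~\ref{prop:main} (with $s = t$). I build inductively a sequence $(k_j, k'_j)$ such that $\Pi^{k_j}(g)$ and $\Pi^{k'_j}(g')$ always lie in a common residue of some finite parabolic $\langle p_j, r_j\rangle$, with each of $k_{j+1}-k_j$ and $k'_{j+1}-k'_j$ in $\{0,1,2,3\}$ and their sum positive; choosing at each step a pair $(k,k')$ supplied by the proposition that makes real progress on whichever side is not yet at $\id$, the iteration reaches $(\id,\id)$ in finitely many steps. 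Writing $P_j = \ell(\Pi^{k_j}(g))$ and $P'_j = \ell(\Pi^{k'_j}(g'))$, the common residue has diameter at most $K$, so $\ell(\Pi^{k_j}(g)^{-1}\Pi^{k'_j}(g')) \leq K$, hence $|P_j - P'_j| \leq K$ by the triangle inequality; moreover $P_j - P_{j+1} \leq 3K$ and $P'_j - P'_{j+1} \leq 3K$, since each application of $\Pi$ drops word length by at most $K$.

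Fix $i$. The boundary cases are handled directly: when $i \geq \max(\ell(g),\ell(g'))$, we have $v(i)^{-1}v'(i) = s$, and when $i$ lies strictly between $\ell(g)$ and $\ell(g')$, the triangle inequality gives $\ell(v(i)^{-1}v'(i)) \leq 2$. Otherwise, from the inductive definition of the standard language, $v(P_j)$ represents $\Pi^{k_j}(g)$ and $v'(P'_j)$ represents $\Pi^{k'_j}(g')$. Let $j^\ast$ be the smallest index with $P_{j^\ast} \leq i$; then $i - P_{j^\ast} \leq P_{j^\ast-1} - P_{j^\ast} \leq 3K$, so $v(i)$ and $v(P_{j^\ast})$ differ by a subword of $v$ of length $\leq 3K$, giving $\ell(v(i)^{-1}\Pi^{k_{j^\ast}}(g)) \leq 3K$. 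On the $v'$ side, $|i - P'_{j^\ast}| \leq |i-P_{j^\ast}| + |P_{j^\ast}-P'_{j^\ast}| \leq 3K+K = 4K$, and the subword of $v'$ between positions $\min(i,P'_{j^\ast})$ and $\max(i,P'_{j^\ast})$ has length $\leq 4K$, giving $\ell(v'(i)^{-1}\Pi^{k'_{j^\ast}}(g')) \leq 4K$. Combining with $\ell(\Pi^{k_{j^\ast}}(g)^{-1}\Pi^{k'_{j^\ast}}(g')) \leq K$ via the triangle inequality in $X^1$ yields $\ell(v(i)^{-1}v'(i)) \leq 8K$, so $C = 8K$ works.

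The main obstacle is the bookkeeping: the $\Pi$-block boundaries of $v$ and $v'$ occur at different positions, so no direct prefix-by-prefix comparison is available. What saves the argument is precisely the common-residue conclusion of Proposition~\ref{prop:main}, propagated simultaneously on both sides, whose consequence $|P_j - P'_j| \leq K$ is exactly the slack needed for the two $\Pi$-iterates $\Pi^{k_{j^\ast}}(g)$ and $\Pi^{k'_{j^\ast}}(g')$ to serve as synchronised anchor points absorbing the misalignment through a single triangle inequality.
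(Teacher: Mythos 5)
Your argument is correct and is essentially the paper's own proof: the paper likewise iterates Proposition~\ref{prop:main} to produce synchronised prefixes of $v$ and $v'$ lying in a common finite residue and concludes with the same triangle-inequality bookkeeping (obtaining the constant $5K$ rather than your $8K$). The only soft spot is your claim that the chain reaches $(\id,\id)$ by ``choosing a pair $(k,k')$ that makes real progress'': the proposition only asserts the existence of some pair, and when one side is already $\id$ it may yield no progress on the other side; this is harmless, though, since then the other element lies in the finite residue containing $\id$, so any remaining prefix has length at most $K$ and satisfies the trivial bound $\ell(v(i)^{-1}v'(i))\leq 2K$.
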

\begin{proof}
As before, let $K$ be the maximal word length of the longest element of a finite
$\langle T\rangle$ over all $T\subseteq S$. Assume without loss of generality
$\ell(gs)>\ell(g)$.

Let $0\leq i\leq \ell(g)$. By Proposition~\ref{prop:main}, there is $0\leq j\leq \ell(g)$ with $|j-i|\leq \frac{3K}{2}$ and $0\leq i'\leq \ell(g)+1$ such that $v(j)$ and $v'(i')$ represent elements of $W$ in a common finite residue. Consequently, we have $\ell\big(v(j)^{-1}v'(i')\big)\leq K,$ and so in particular $|j-i'|\leq K$. Therefore $\ell\big(v(i)^{-1}v'(i)\big)\leq |i-j|+\ell\big(v(j)^{-1}v'(i')\big)+|i'-i|\leq 5K$.
\end{proof}

In the proof of Proposition~\ref{prop:main} we will use the following \emph{truncated piecewise Euclidean structure}
on the barycentric subdivision $X'$ of the Cayley complex $X$ of~ $W$. Consider the function $q\colon\N_{\geq 2}\to \N_{\geq 2}$, defined as

\[q(m)=\begin{cases}
m, & \text{for } m=2,3, \\
4, & \text{for } m=4,5, \\
    6, & \text{for } m\geq 6.
\end{cases}
\]

Note that each triangle $\sigma$ of $X'$ is a triangle in the barycentric subdivision of a
regular $2m$-gon of $X$ spanned on an $\{s,t\}$-residue with $m_{st}=m<\infty$. Consequently, in the usual piecewise Euclidean structure, $\sigma$ has angles $\frac{\pi}{2m},\frac{\pi}{2},\big(1-\frac{1}{m}\big)\frac{\pi}{2}$. Moreover, the edge opposite to $\frac{\pi}{2m}$ is half of the edge of $X^1$, so it has length $\frac{1}{2}$. In the truncated piecewise Euclidean structure, we choose a different metric on $\sigma$, namely that of a triangle in the barycentric subdivision of
a regular $2q(m)$-gon. More precisely, the angles of $\sigma$ are $\frac{\pi}{2q(m)},\frac{\pi}{2},\big(1-\frac{1}{q(m)}\big)\frac{\pi}{2}$, while the length of the edge opposite to $\frac{\pi}{2q(m)}$ is still $\frac{1}{2}$.

In the following, let $v$ be a vertex of $X'$. The \emph{link} of $v$ in $X'$ is the metric graph whose vertices correspond to the edges of $X'$ incident to $v$. Vertices of the link corresponding to edges $e_1,e_2$ of $X'$ are connected by an edge of length $\theta$, if $e_1$ and~$e_2$ lie in a common triangle $\sigma$ of $X'$ and form angle $\theta$ in $\sigma$. A \emph{loop} in the link is a locally embedded closed edge-path.

\begin{lemma}
\label{lem:truncated}
The truncated piecewise Euclidean structure satisfies the \emph{link condition}, i.e.\ each loop in the link of a vertex $v$ of $X'$ has length $\geq 2\pi$.
\end{lemma}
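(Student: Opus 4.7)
The plan is to check the link condition at each of the three types of vertices of $X'$ in turn---original group-element vertices $g\in X^{0}$, midpoints of edges of $X^{1}$, and centres of the polygons of $X$---using that for each type the link has an explicit combinatorial description whose edge lengths are read off from the triangle angles $\frac{\pi}{2q(m)}$, $\frac{\pi}{2}$, $(1-\frac{1}{q(m)})\frac{\pi}{2}$.

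The first two types are easy. At the centre of a $2m_{st}$-polygon the link is a single cycle of $4m_{st}$ edges, each of length $\frac{\pi}{2q(m_{st})}$, so of total length $\frac{2\pi m_{st}}{q(m_{st})}$; a direct inspection of the four ranges in the definition of $q$ shows $q(m)\leq m$ for every $m\geq 2$, so this is at least $2\pi$. At the midpoint of an edge $(g,gs)$ of $X^{1}$, the link is bipartite: one side is $\{g,gs\}$, the other is the set of centres of the polygons of $X$ containing $(g,gs)$, and every edge of the link has length $\pi/2$ (the right angle of the barycentric triangles). Hence every loop has at least four edges and length at least $2\pi$.

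The substantial case is the link at a vertex $g\in W$. This link is again bipartite, with one vertex $\hat s$ for each $s\in S$ and one vertex $\widehat{\{s,t\}}$ for each pair with $m_{st}<\infty$; each pair-vertex has degree exactly two, joined to $\hat s$ and $\hat t$ by edges of length $(1-\tfrac{1}{q(m_{st})})\tfrac{\pi}{2}$. A locally embedded loop therefore corresponds to a cyclic sequence $s_{1},\ldots,s_{k}$ in $S$ with $s_{i-1}\neq s_{i+1}$ and $m_{s_{i}s_{i+1}}<\infty$ for all $i$, and its total length equals
\[
\pi\Big[k-\sum_{i=1}^{k}\tfrac{1}{q(m_{s_{i}s_{i+1}})}\Big].
\]
For $k\geq 4$ each summand is at most $\tfrac{1}{2}$, so the sum is at most $k/2\leq k-2$ and the length is automatically at least $2\pi$.

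The main obstacle, and the reason $q$ is defined the way it is, is the case $k=3$: for distinct $s,t,r\in S$ with $m_{st},m_{tr},m_{rs}$ all finite one must show
\[
\tfrac{1}{q(m_{st})}+\tfrac{1}{q(m_{tr})}+\tfrac{1}{q(m_{rs})}\leq 1.
\]
The $2$-dimensionality hypothesis gives the same inequality with $q$ removed, but since $q(m)<m$ for $m\geq 5$ this does not transfer for free. I would prove the $q$-inequality by a short case analysis on the multiset $\{m_{st},m_{tr},m_{rs}\}$: when the smallest entry is at least $4$ each summand is $\leq \tfrac{1}{4}$ and the bound is immediate; otherwise the $2$-dim hypothesis reduces the problem to the finite list $(2,3,c\geq 6)$, $(2,4,c\geq 4)$, $(2,5,c\geq 5)$, $(3,3,c\geq 3)$, $(3,b\geq 4,c\geq b)$, and one checks that $q$ has been calibrated precisely so that the Euclidean triples $(2,3,6)$, $(2,4,4)$, $(3,3,3)$ and the saturated triples $(2,3,c\geq 7)$, $(2,4,5)$, $(2,5,5)$ land on the equality case, while every other triple gives a strict inequality. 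Together with the two easy cases above this verifies the link condition at every vertex of $X'$.
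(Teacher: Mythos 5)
Your proof is correct and follows essentially the same route as the paper: the same three vertex types, the easy bounds at edge-midpoints and polygon centres, and the reduction of the $X^0$-case to the inequality $\frac{1}{q(m_{st})}+\frac{1}{q(m_{tr})}+\frac{1}{q(m_{sr})}\leq 1$ for combinatorial triangles in the link, which the paper verifies a bit more briefly by splitting according to how many of the three $q$-values equal $2$ (using $2$-dimensionality exactly as you do). One small slip in your enumeration: the shape $(2,b,c)$ with $b,c\geq 6$ is absent from your list, but it is harmless since there the sum is $\frac{1}{2}+\frac{1}{6}+\frac{1}{6}<1$.
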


\begin{proof}[Proof of Lemma~\ref{lem:truncated}] If $v$ is the barycentre on an edge of $X$, then its link is a simple bipartite graph all of whose edges have length $\frac{\pi}{2}$. Hence its loops have length $\geq 4\frac{\pi}{2}=2\pi$.
If $v$ is the barycentre of a polygon of $X$, then its link is a circle that had length $2\pi$ in the usual piecewise Euclidean structure.
The angles at the barycentre of a polygon in the truncated Euclidean structure are at least as large as the angles in the usual piecewise Euclidean structure, and consequently in the truncated Euclidean structure the link has length $\geq 2\pi$. 

It remains to consider a vertex $v\in X^0$, and its link $L'$ in $X'$. For each triangle~$\sigma$ of $X'$ incident to $v$ there is exactly one other triangle $\tau$ of $X'$ incident to $v$ with common hypothenuse, and they lie in the same polygon of $X$. Let $L$ be the graph obtained from $L'$ by merging into one edge each pair of edges corresponding to such $\sigma$ and~$\tau$.
Note that $L$ is isometric to $L'$.
The graph $L$ has a vertex corresponding to each $s\in S$ and an edge of length $\big(1-\frac{1}{q(m_{st})}\big)\pi\geq \frac{\pi}{2}$ joining the vertices corresponding to $s,t$, for each $s,t\in S$ with $m_{st}<\infty$. In particular, all the loops in $L$ of combinatorial length $\geq 4$ have metric length $\geq 2\pi$. To obtain the same for loops in $L$ of combinatorial length $3$, we need to verify that for each triple of distinct $s,t,r\in S$, we have
\begin{equation*}
    \frac{1}{q(m_{st})}+\frac{1}{q(m_{tr})}+\frac{1}{q(m_{sr})}\leq 1.\tag{$*$}
\end{equation*}

If $q(m_{st}),q(m_{tr}), q(m_{sr})\neq 2$, then ($*$) holds. If $q(m_{st}),q(m_{tr})\neq 2$ and $q(m_{sr})=2$, then $m_{st},m_{tr}\neq 2$ and $m_{sr}=2$. Since $W$ is $2$-dimensional, we have $m_{st},m_{tr}\geq 4$ or $m_{st}\geq 6$ or $m_{tr}\geq 6$. We then have, respectively, $q(m_{st}),q(m_{tr})\geq 4$ or $q(m_{st})\geq 6$ or $q(m_{tr})\geq 6$, and so ($*$) holds in this case as well. Finally,
if $q(m_{st})=q(m_{sr})=2$, then $m_{st}=m_{sr}=2$, contradicting the $2$-dimensionality of $W$.
\end{proof}

Below, for two edges $e_1,e_2$ incident to a vertex $v$ of $X'$, by their \emph{angle} at $v$ we mean the distance in the link of $v$ between the vertices that $e_1,e_2$ correspond to. Since $X'$ satisfies the link condition, this is the same as the Alexandrov angle if the latter is $<\pi$.

\begin{lemma}
\label{lem:angles}
Let $W$ be a $2$-dimensional Coxeter group.
Let $\gamma, \gamma'$ be geodesic edge-paths in $X^1$
with common endpoints. Suppose that there are walls $\mathcal W_i$ in $X$ with $i=1,2,3,$ such that
$\gamma$ intersects them in the opposite order to $\gamma'$, and that $\mathcal W_2$ is the middle one in
both of these orders. For $i=1,3,$ let $\theta_i$ be the angle in the truncated structure at $x_i=\mathcal W_2\cap\mathcal W_i$
formed by the segments in $\mathcal W_2, \mathcal W_i$ from $x_i$ to $\gamma\cap \mathcal W_2$ and $\gamma\cap\mathcal W_i$.
Then $\theta_1+\theta_3<\pi$.
\end{lemma}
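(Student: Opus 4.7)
My plan is to apply the $\mathrm{CAT}(0)$ angle-sum inequality to a geodesic triangle $T$ in $X$ (with the truncated piecewise Euclidean structure, which is $\mathrm{CAT}(0)$ by Lemma~\ref{lem:truncated} and the contractibility of $X$) whose three sides lie in $\mathcal W_1, \mathcal W_2, \mathcal W_3$. The bound $\theta_1 + \theta_3 < \pi$ will follow once the angle of $T$ at the third vertex is shown to be strictly positive and the angles at the first two vertices are identified with $\theta_1$ and $\theta_3$.

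First, I would establish that $\mathcal W_1 \cap \mathcal W_3 \neq \emptyset$: two disjoint walls must be crossed in the same order by any pair of geodesic edge-paths sharing endpoints, so the opposite-order hypothesis rules out their disjointness. Since in the $2$-dimensional setting two distinct walls meet in at most one point, I set $y = \mathcal W_1 \cap \mathcal W_3$. Each wall being convex in $X$, the geodesic triangle $T$ with vertices $x_1, x_3, y$ has sides $[x_1, x_3] \subset \mathcal W_2$, $[x_3, y] \subset \mathcal W_3$, and $[y, x_1] \subset \mathcal W_1$.

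The main step is to identify the angles of $T$ at $x_1, x_3$ with $\theta_1, \theta_3$. This reduces to a direction-matching question at each $x_i$ ($i = 1, 3$): does each side of $T$ emanate from $x_i$ in the same direction as the segment to the $\gamma$-reference point used to define $\theta_i$? I would track, for each relevant point (the intersection $y$, the two $\gamma$- and two $\gamma'$-crossings with $\mathcal W_1$ and $\mathcal W_3$, and $a_2 = \gamma \cap \mathcal W_2$ relative to $x_1, x_3$ in the tree $\mathcal W_2$), on which side of $\mathcal W_2$ it lies, and use the opposite-order hypothesis to rule out the "bad" scenario of a single direction-flip at some $x_i$ that would replace $\theta_i$ by its supplement. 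The $2$-dimensionality of $W$ enters here: the three walls pairwise intersect into a non-degenerate triangle, and the opposite ordering of crossings on $\gamma$ versus $\gamma'$ forces the "correct" pairing of directions.

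Finally, the angle $\alpha_y$ of $T$ at $y$ is strictly positive, since $\mathcal W_1$ and $\mathcal W_3$ are distinct walls meeting transversely at $y$, making an angle at least the smallest positive angle of a $2$-cell of $X'$ in the truncated structure. Combining the $\mathrm{CAT}(0)$ angle-sum bound $\theta_1 + \theta_3 + \alpha_y \leq \pi$ with $\alpha_y > 0$ yields $\theta_1 + \theta_3 < \pi$, as required. The hardest step will be the direction-matching: carefully unpacking the opposite-order hypothesis to ensure that $\theta_i$ coincides with (rather than being supplementary to) the angle of $T$ at $x_i$, which requires a delicate analysis of how the wall trees branch around the crossings $x_1$ and $x_3$.
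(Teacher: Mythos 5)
Your overall strategy (a geodesic triangle with sides on the three walls, the $\mathrm{CAT}(0)$ angle--sum comparison, and strict positivity of the third angle) is the same comparison idea as in the paper, but you run it directly in $X$, and the step you explicitly defer --- identifying the angles of your triangle $T$ at $x_1,x_3$ with $\theta_1,\theta_3$ --- is not bookkeeping: it is the entire content of the lemma, and in the naive form you describe it cannot hold. Indeed, since $\mathcal W_1\cap\mathcal W_2=\{x_1\}$ and $\mathcal W_3\cap\mathcal W_2=\{x_3\}$, the two germs of $\mathcal W_1$ at $x_1$ (resp.\ of $\mathcal W_3$ at $x_3$) point into the two open halfspaces of $\mathcal W_2$, while $\gamma\cap\mathcal W_1$ lies on the side of $\mathcal W_2$ containing the common starting point and $\gamma\cap\mathcal W_3$ lies on the other side. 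Hence, whichever side of $\mathcal W_2$ the vertex $y=\mathcal W_1\cap\mathcal W_3$ lies on, at exactly one of $x_1,x_3$ the side of $T$ along $\mathcal W_1$ (or $\mathcal W_3$) points \emph{away} from the $\gamma$-crossing used to define $\theta_i$. At that vertex the angle of $T$ can equal $\theta_i$ only via a vertical-angle argument, which requires that the $\mathcal W_2$-side of $T$ flips there as well, i.e.\ that $x_1$ and $\gamma\cap\mathcal W_2$ lie on opposite sides of $\mathcal W_3$ (equivalently, suitable separation facts relating $x_1,x_3$ to $\mathcal W_3,\mathcal W_1$). Nothing in your proposal establishes these facts; if only one of the two reference directions flips, the angle of $T$ is $\pi-\theta_i$ and your inequality yields nothing. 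In addition, your argument silently assumes $x_1,x_3,y$ are distinct, whereas for $2$-dimensional $W$ all three walls can pass through a single polygon centre (any $2m$-gon with $m\geq 3$ carries at least three walls through its centre), in which case $T$ degenerates to a point and a separate local (Euclidean) argument around that centre is needed.

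This missing configuration control is exactly what the paper's proof buys by passing to a reduced disc diagram $\varphi\colon D\to X$ with boundary $\gamma^{-1}\gamma'$ and pulling back the truncated metric: there the preimages of the three walls are geodesic arcs properly embedded in a disc, the cyclic order of their six endpoints on $\partial D$ forces (by elementary planarity) the order of the intersection points along the $\mathcal W_2$-arc to be coherent with the side on which the third vertex lies, so at the ``flipped'' vertex both reference directions flip together and the triangle angles are honestly $\theta_1,\theta_3$; the $\mathrm{CAT}(0)$ comparison is then performed in $D$ (which is $\mathrm{CAT}(0)$ by Lemma~\ref{lem:truncated}), and one never needs your triangle in $X$, nor even the fact that $\mathcal W_1\cap\mathcal W_3\neq\emptyset$ in $X$ (your argument for that, via disjoint walls being crossed in the same order, is fine). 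To salvage your direct approach in $X$ you would have to prove the separation statements above and treat the degenerate concurrent case; as it stands, the proposal has a genuine gap at its central step.
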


\begin{figure}[h!]
	\begin{center}
		\includegraphics[scale=0.63]{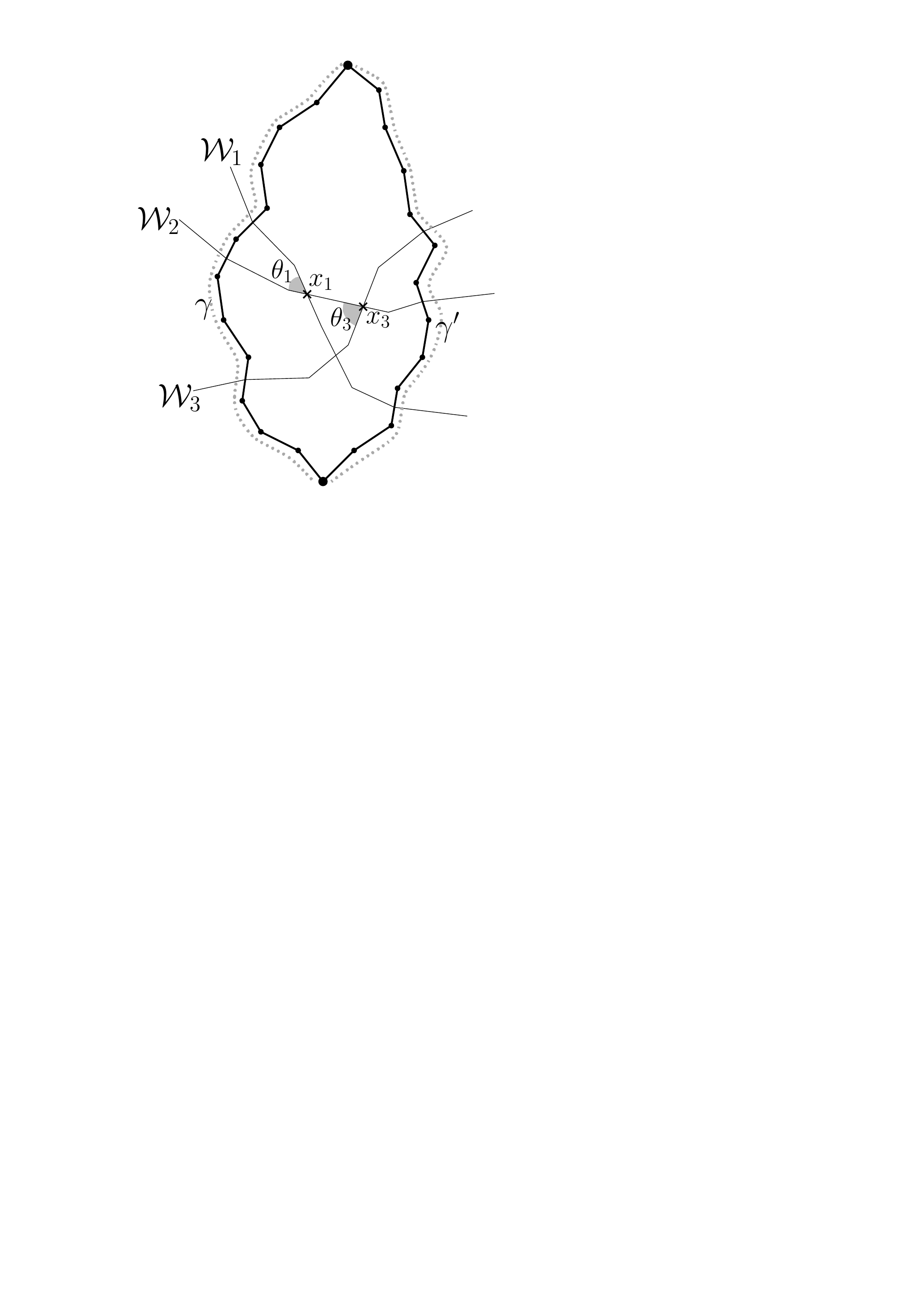}
	\end{center}
	\caption{Lemma~\ref{lem:angles}}\label{f:f1}
\end{figure}

See Figure~\ref{f:f1} for an illustration. Note that in the definition of either $\theta_i$ we could replace $\gamma$ by $\gamma'$.

In the proof we will need the following terminology. A \emph{combinatorial $2$-complex} is a $2$-dimensional CW complex in which the attaching maps of $2$-cells are closed edge-paths. For example, the Cayley complex $X$ of a Coxeter group is a combinatorial 2-complex. A \emph{disc diagram} $D$ is a compact contractible combinatorial 2-complex with a fixed embedding in $\R^2$. Its \emph{boundary path} is the attaching map of
the cell at $\infty$. If $X$ is a combinatorial 2-complex, \emph{a disc diagram in $X$} is a cellular map $\varphi \colon D\to X$ that is \emph{combinatorial}, i.e.\ its restriction to each cell of $D$ is a homeomorphism onto a cell of $X$.
The \emph{boundary path} of a disc diagram $\varphi \colon D\to X$ is the composition of the boundary path of $D$ and $\varphi$. We say that $\varphi$ is \emph{reduced}, if it is locally injective on $D-D^0$.

\begin{proof}[Proof of Lemma~\ref{lem:angles}]
Let $\varphi \colon D\to X$ be a reduced disc diagram in $X$ with boundary
$\gamma^{-1}\gamma'$ (for the existence of $\varphi$, see for example \cite[\S V.1--2]{LS}).
Consider the piecewise Euclidean structure on the barycentric subdivision $D'$ of $D$ that is the pullback under $\varphi$ of the truncated Euclidean structure on $X'$. By Lemma~\ref{lem:truncated} and \cite[II.5.4]{BH}, the induced path-metric on $D$ is $\mathrm{CAT}(0)$. Furthermore, for each wall $\mathcal W$ in $X$, the preimage $\varphi^{-1}(\mathcal W)$ is a geodesic in $D$.
Thus $D$ contains a geodesic triangle formed by the segments of  $\varphi^{-1}(\mathcal W_i)$ joining their three intersection points.
Its angles indicated in Figure~\ref{f:f1} equal $\theta_1,\theta_3$. Since the Alexandrov angles of that triangle do not exceed the angles of the comparison triangle in the Euclidean plane \cite[II.1.7(4)]{BHa}, we have $\theta_1+\theta_3<\pi$.
\end{proof}

\begin{cor}
\label{cor:step1}
Let $W$ be a $2$-dimensional Coxeter group.
Let $f\in W$ with $T(f)=\{s,t\}$, with $s\neq t$.
Let $h=\Pi(f)$ and let $R$ be the $\{s,t\}$-residue containing $f$ and $h$.
Let $g\in R$ and let $m$ be the distance in $X^1$ between $g$ and $h$. Suppose $T(g)=\{s,r\}$ with $r\neq s,t$.
Then:
\begin{enumerate}[(i)]
\item $m\leq 3$.
\item If $m=3$, then $m_{sr}=2$.
\item If $m_{st}=3$ and $m=2$, then $m_{sr}=2$.
\item If $m_{st}=4$, then $m\leq 2$.
\item If $m=m_{st}-1$, then $m_{st}\leq 3$, and for $m_{st}=3$ we have $m_{sr}=2$.
\end{enumerate}
\end{cor}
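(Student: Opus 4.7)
Plan: The proof proceeds by applying Lemma~\ref{lem:angles} to carefully chosen geodesics and triples of walls, extracting numerical constraints on $m$, $m_{st}$, $m_{sr}$ from the angle inequality $\theta_1+\theta_3<\pi$. I would take $\gamma$ to be the geodesic from $\id$ that passes through $h$ and then travels along the short arc of $R$ to $g$ (necessarily ending with the $s$-edge at $g$, since $s\in T(g)$ but $t\notin T(g)$), and $\gamma'$ to be a geodesic from $\id$ through $\Pi(g)$ along $R'$ to $g$, choosing in $R'$ the arc ending with the $r$-edge at $g$. The crucial combinatorial fact is that $R\cap R'$ consists only of the $s$-edge $(gs,g)$, so $\mathcal W_s$ is the unique wall dual to edges of both residues; in particular the walls $\mathcal U_1,\ldots,\mathcal U_{m-1}$ of $R$ separating $h$ from $g$ other than $\mathcal U_m=\mathcal W_s$ all lie outside $R'$, and the wall $\mathcal W_r$ dual to the $r$-edge at $g$ lies outside $R$.

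Next, in the truncated piecewise Euclidean structure (Lemma~\ref{lem:truncated}) I would compute the basic angles. At $c_{R'}$ the walls $\mathcal W_s,\mathcal W_r$ are dual to adjacent edges of $R'$ and meet at angle $\pi/q(m_{sr})$; at $c_R$ consecutive walls of $R$ meet at angle $\pi/q(m_{st})$, so that the angle between $\mathcal U_i$ and $\mathcal W_s=\mathcal U_m$ at $c_R$ equals $(m-i)\pi/q(m_{st})$. The observation that $\gamma$ enters $R'$ only at its endpoint and $\gamma'$ enters $R$ only at its endpoint controls, in each case, which side of $c_R$ (respectively $c_{R'}$) the geodesic crossings with $\mathcal U_i$ and $\mathcal W_s$ (respectively $\mathcal W_r$ and $\mathcal W_s$) lie on, so that the relevant $\theta_i$ are the small angles rather than their supplements.

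For each of parts (i)--(v), I would pick a triple $(\mathcal W_1,\mathcal W_2,\mathcal W_3)$ satisfying the opposite-order hypothesis of Lemma~\ref{lem:angles} and substitute. For (i), a triple using $\mathcal W_r$, a ``middle'' wall, and a wall $\mathcal U_i$ of $R$ yields an inequality of the form $\frac{(m-1)\pi}{q(m_{st})} + \frac{\pi}{q(m_{sr})} < \pi$, which for $m \geq 4$ is incompatible with the admissible values $q(m_{st})\in\{m_{st},4,6\}$ and $q(m_{sr})\geq 2$, forcing $m\leq 3$. Parts (ii)--(v) are refinements of the same computation: (ii) uses that for $m=3$ the angular span of $\mathcal U_1,\mathcal U_2,\mathcal W_s$ at $c_R$ is $2\pi/q(m_{st})$, leaving only room for $\pi/q(m_{sr})$ at $c_{R'}$ when $q(m_{sr})=2$; (iv) is the analogous statement for $q(m_{st})=q(4)=4$; and (iii) and (v) are the finer numerical consequences when $m_{st}=3$ or $m=m_{st}-1$.

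The main obstacle is to verify, in each case, that a triple satisfying the opposite-order condition genuinely exists with $\mathcal W_2$ middle on both $\gamma$ and $\gamma'$. Because $\mathcal W_s$ is last on $\gamma$ while $\mathcal W_r$ is last on $\gamma'$, a direct triple like $(\mathcal W_r,\mathcal W_s,\mathcal U_i)$ does not immediately work; one has to choose $\mathcal W_2$ to be, for example, a wall of $R'$ adjacent to $\mathcal W_r$ (angularly at $c_{R'}$) which also lies in the initial part of $\gamma$ between $\mathcal W_r$ and $h$, using the parallel-wall theorem (Theorem~\ref{thm:parallel}) to locate it. Once this combinatorial bookkeeping is done, the inequality $\theta_1+\theta_3<\pi$ reduces to purely arithmetic constraints on $m$, $q(m_{st})$ and $q(m_{sr})$, and the case analysis yields each of (i)--(v).
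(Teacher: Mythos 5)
Your overall strategy (apply Lemma~\ref{lem:angles} in the truncated structure to walls attached to $R$ and to the $\{s,r\}$-residue $R'$ of $g$, then do arithmetic with $q$) is the paper's strategy, but your concrete setup has a genuine gap that you yourself flag and do not close. With both $\gamma$ and $\gamma'$ running from $\id$ to $g$, the wall $\mathcal W_s$ dual to the $s$-edge at $g$ is the \emph{last} wall crossed by $\gamma$, and $\mathcal W_r$ is the \emph{last} wall crossed by $\gamma'$; moreover every wall of $R$ other than $\mathcal W_s$ (your $\mathcal U_i$) is crossed by $\gamma'$ before $\Pi(g)$, while $\mathcal W_s$ is crossed by $\gamma'$ only afterwards, inside $R'$. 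Hence the orders are $\mathcal W_r<\mathcal U_i<\mathcal W_s$ on $\gamma$ and $\mathcal U_i<\mathcal W_s<\mathcal W_r$ on $\gamma'$: no triple drawn from $\{\mathcal W_r,\mathcal U_i,\mathcal W_s\}$ has a common middle wall, so Lemma~\ref{lem:angles} never applies to the walls whose angles you compute. The proposed repair --- taking $\mathcal W_2$ to be a wall of $R'$ angularly adjacent to $\mathcal W_r$ and ``located'' via Theorem~\ref{thm:parallel} --- is unsubstantiated: nothing forces $\gamma$ to cross it after $\mathcal W_r$, and even if it did, the two intersection points $x_1,x_3$ would either both collapse to the centre of $R'$ (if the third wall is $\mathcal W_s$) or involve an intersection with a $\mathcal U_i$ at a point where you have no angle control, so the quantities entering your inequality are not the ones the lemma gives.

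There is also an arithmetic problem independent of the bookkeeping: because your rays at the centre of $R'$ point towards $g$, you only ever get the small angle $\pi/q(m_{sr})$, and the inequality $\tfrac{(m-1)\pi}{q(m_{st})}+\tfrac{\pi}{q(m_{sr})}<\pi$ is too weak to prove the corollary. For instance $m=4$, $m_{st}\geq 6$, $m_{sr}=3$ gives $\tfrac{\pi}{2}+\tfrac{\pi}{3}<\pi$, so part~(i) does not follow; and for part~(ii) larger $m_{sr}$ makes your second term smaller, so no contradiction can arise. The paper avoids both issues by running both geodesics from $f$ to $\id$: $\gamma$ through the arc of $R$ avoiding $g$, and $\gamma'$ through $g$ and then \emph{all the way around} $R'$ from $g$ to $\Pi(g)$ starting with the $r$-edge, with $\mathcal W_1$ the wall dual to the first edge of $\gamma$ at $f$, $\mathcal W_2=\mathcal W_s$, $\mathcal W_3=\mathcal W_r$. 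Then $\mathcal W_s$ is the middle wall in both crossing orders, and the second angle becomes the \emph{large} one, $\theta_3=(m_{sr}-1)\pi/q(m_{sr})\geq \pi/2$, while $\theta_1=(m-1)\pi/q(m_{st})$; it is this strengthened pair of bounds that yields all of (i)--(v). As written, your argument does not reach any of the five parts.
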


\begin{proof} Note that $T(g)=\{s,r\}$ implies in particular
$g\neq f,h$. Let $\gamma_0$ be the geodesic edge-path in $X^1$ from $f$ to $h$ not containing $g$. Let $\gamma_1$ be
the geodesic edge-path of length $m_{sr}$ with vertices in the $\{s,r\}$-residue containing $g$, starting at $g$ with the $r$-edge. Let $\gamma$ be any
geodesic edge-path from $f$ to $\id$ containing $\gamma_0$. Let $\gamma'$ be any geodesic edge-path from $f$ to $\id$ containing $\gamma_1$.
Let $\mathcal W_1$ be the first wall intersecting~$\gamma$. Let $\mathcal W_2$ be the wall dual to the $s$-edge incident to $g$.
Let $\mathcal W_3$ be the wall dual to the $r$-edge incident to $g$. See Figure~\ref{fig:corollary}. Note that $\mathcal W_3$ does not
intersect $R$ (since then $\mathcal W_2$ and $\mathcal W_3$ would intersect twice in $X$) and, analogously, $\mathcal W_1$ does
not intersect the $\{s,r\}$-residue of $g$. Consequently, we are in the setup of Lemma~\ref{lem:angles} and we let $\theta_1,\theta_3$ be as in that lemma, so that $\theta_1+\theta_3<\pi$.

\begin{figure}[h!]
	\begin{center}
		\includegraphics[scale=0.6]{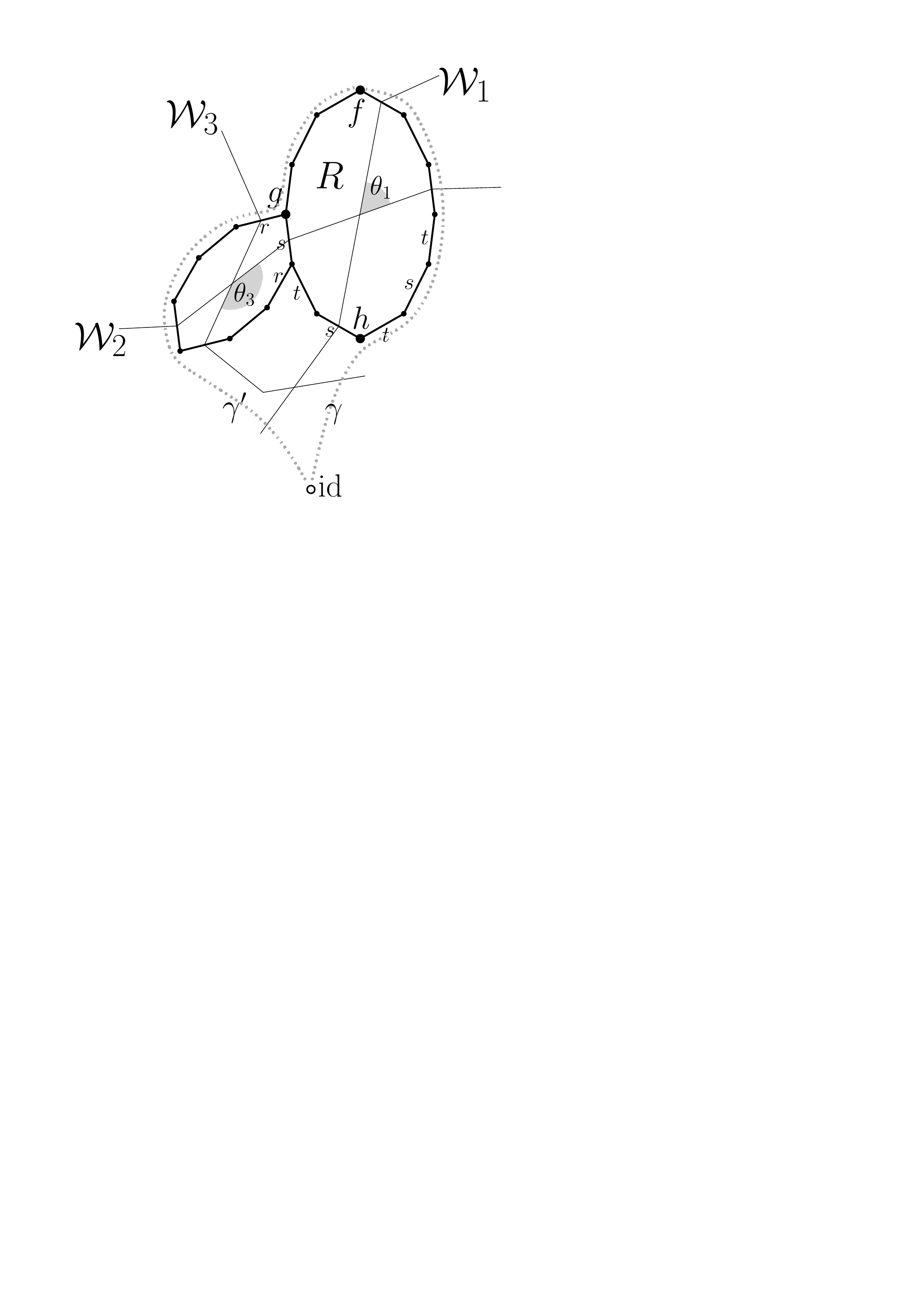}
	\end{center}
	\caption{Corollary~\ref{cor:step1}}\label{fig:corollary}
\end{figure}

Observe that we have $\theta_1=(m-1)\frac{\pi}{q(m_{st})}$ and $\theta_3=(m_{sr}-1)\frac{\pi}{q(m_{sr})}$.

To prove part (i), assume $m\geq 4$. We then have $\theta_1\geq \frac{\pi}{2}$. However, $\theta_3\geq \frac{\pi}{2}$, which
contradicts Lemma~\ref{lem:angles}.

For part (ii), if $m=3$ then we only have $\theta_1\geq \frac{\pi}{3}$. However, assuming $m_{sr}\geq 3$, we would have $\theta_3\geq \frac{2\pi}{3}$,
which also contradicts Lemma~\ref{lem:angles}.

For part (iii), if $m=2$ and $m_{st}=3$, then we have $\theta_1=
\frac{\pi}{3}$. Assuming $m_{sr}\geq 3$, we would have $\theta_3\geq \frac{2\pi}{3}$ as before,
which contradicts Lemma~\ref{lem:angles}.

To prove part (iv), if we had $m_{st}=4$ and $m\geq 3$, then $\theta_1\geq \frac{\pi}{2}$ and $\theta_3\geq
\frac{\pi}{2}$ would
also contradict Lemma~\ref{lem:angles}.

For part (v), assume $m=m_{st}-1$. The case $m_{st}\geq 5$ is excluded by part (i), and the case $m_{st}=4$ is excluded by part (iv). For $m_{st}=3$ we
have $m_{sr}=2$ by part (iii).
\end{proof}

\begin{proof}[Proof of Proposition~\ref{prop:main}]
If $s=t$, then without loss of generality $s\in T(g)$, and we
can take $k=1, k'=0$.

Assume now $s\neq t$. Let $R=g \langle s,t \rangle$, and let $f,h\in R$ be the elements
with maximal and minimal word length, respectively.
Let $m,m'$ be the distances in~$X^1$ between $h$ and
$g,g'$, respectively. We can assume $\Pi(g),\Pi(g')\notin R$. Then in
particular $m,m'\neq m_{st}$ and if $m\neq 0$ we have
$|T(g)|=2$ and $T(g)$ contains exactly one of $s,t$. Without loss of generality we suppose then
$T(g)=\{s,r\}$ for some $r\neq s,t$.

Note that from Corollary~\ref{cor:step1}(i) it follows that $m\leq 3$. Furthermore, by Corollary~\ref{cor:step1}(ii) if
$m=3$, then $m_{sr}=2$. An analogous statement holds for $m'$.

\smallskip

\noindent \textbf {Case 1: $m=3$, or $m=2$ and $m_{sr}\geq 3$.}

If $m=3$, then denoting by $\hat g$ the $s$-neighbour of $g$, we have $T(\hat g)=\{t,r\}$. Since $m_{sr}=2$, we have $m_{tr}\geq 3$.

\begin{figure}[h!]
	\begin{center}
		\includegraphics[scale=0.57]{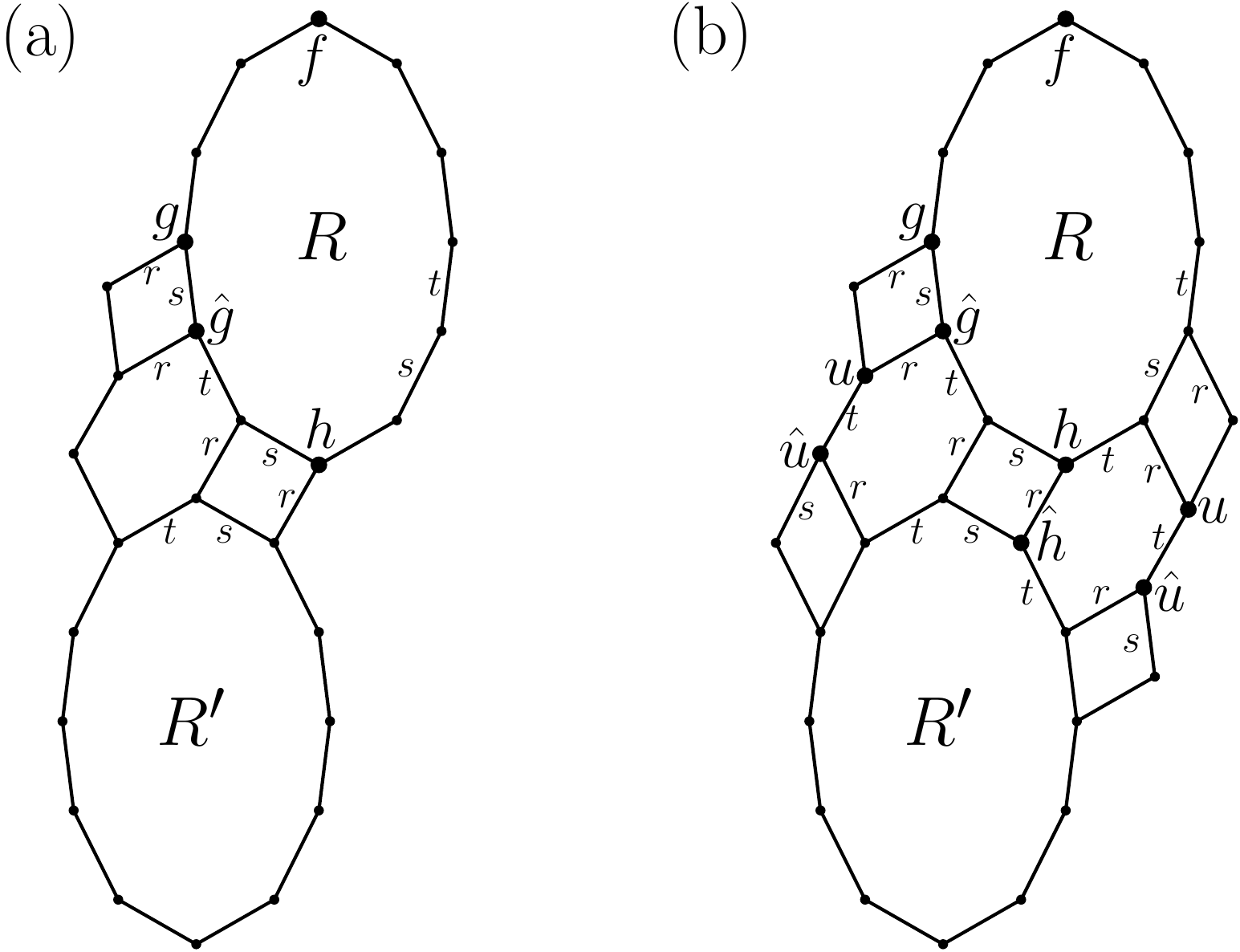}
	\end{center}
	\caption{Proof of Proposition~\ref{prop:main}, Case 1.}\label{fig:step2}
\end{figure}

Applying Corollary~\ref{cor:step1}(v), with $f$ replaced by $\hat g$ and $g$ replaced by the $t$-neighbour of $\hat g$, gives $m_{tr}=3$, and so $m_{st}\geq 6$.
Consequently, in $X^1$ we have the configuration described in Figure~\ref{fig:step2}(a),
where for each edge $(q,\hat q)$ of $X^1$ the vertex $q$ is drawn higher than $\hat q$ if $\ell(q)>\ell(\hat q)$.
For each edge-path in $X^1$ labelled $sr,rs,$ or $trt$, with endpoints $q,\hat q$ satisfying $\ell(q)-\ell(\hat q)=2$ or $3$, respectively, there is another edge-path from $q$ to $\hat q$ labelled $rs,sr,$ or $rtr$, respectively. The word lengths of the consecutive vertices of such a path
are $\ell(q),\ell(q)-1,\ell(q)-2=\ell(\hat q),$ or $\ell(q),\ell(q)-1,\ell(q)-2,\ell(q)-3=\ell(\hat q),$ respectively.
Thus the configuration described in Figure~\ref{fig:step2}(a) extends to the configuration in Figure~\ref{fig:step2}(b). In particular, we have $m'\neq 3$,
since otherwise, for $r'\in S$ satisfying $T(g')=\{t,r'\}$, denoting by $\hat{g}'$ the $t$-neighbour of~$g'$, we have $T(\hat{g}')=\{s,r'\}$ and so $r'=r$. Since $m_{tr'}=2$, we have $3\leq m_{sr'}=m_{sr}$, which is a contradiction. Consequently, $m'\leq 2$.

Consider any of the two vertices labelled by $u$ in Figure~\ref{fig:step2}(b). Note that $T(u)=\{t\}$,
since having $|T(u)|=2$ would force the $t$-neighbour $\hat u$ of $u$ to have $|T(\hat u)|\geq 3$.
This implies that $\Pi^3(g)$ lies on the lower $\{s,t\}$-residue $R'$ in Figure~\ref{fig:step2}(b).
Furthermore, note that $T(h)=\{r\}$, since having $T(h)=\{r,p\}$ for some $p\in S$
would force the $r$-neighbour $\hat h$ of $h$ to have $T(\hat
h)=\{t,p\}$, contradicting Corollary~\ref{cor:step1}(v) with $g$ replaced by $\hat
h$, and $f$ replaced by the $s$-neighbour of $\hat h$, since it would imply $m_{st}\leq 3$.
Consequently, in any of the cases $m'=0,1,2$, there is
$k'\leq 3$ with $\Pi^{k'}(g')\in R'$, as desired.

If $m=2$ and $m_{sr}\geq 3$, then the same proof goes through with
the following minor changes. Namely, $m_{sr}=3$ and $m_{tr}=2$ follow from Corollary~\ref{cor:step1}(v)
applied with $f$ replaced by $g$ and $g$ replaced by the $s$-neighbour of $g$. The remaining part of the
proof is the same, with $s$ and $t$ interchanged, except that it is $\Pi(g)$ instead of $\Pi^3(g)$ that lies in $R'$.
Namely, in $X^1$ we have the configuration described in Figure~\ref{fig:step2}(a), with the top square removed, $s$ and $t$ interchanged, and $\hat g$ replaced with $g$. Consequently, we have the configuration described in Figure~\ref{fig:step2}(b), with the same modifications. We obtain $k'\leq 3$ with $\Pi^{k'}(g')\in R'$ as before.

\smallskip

\noindent \textbf {Case 2: $m=2$ and $m_{sr}=2$.}

We have $m_{tr}\geq 3$ and the configuration from Figure~\ref{fig:step3} inside $X^1$. Note that if $m'=2$, then we can assume $T(g')=\{t\}$. Indeed, if $T(g')=\{t,p\}$, then we can assume $m_{tp}=2$ since otherwise interchanging $g,g'$ we can appeal to Case~1. Thus $p\neq r$, and so the $t$-neighbor $\hat {g}'$ of $g'$ has $|T(\hat {g}')|\geq 3$, which is a contradiction.

\begin{figure}[h!]
	\begin{center}
		\includegraphics[scale=0.57]{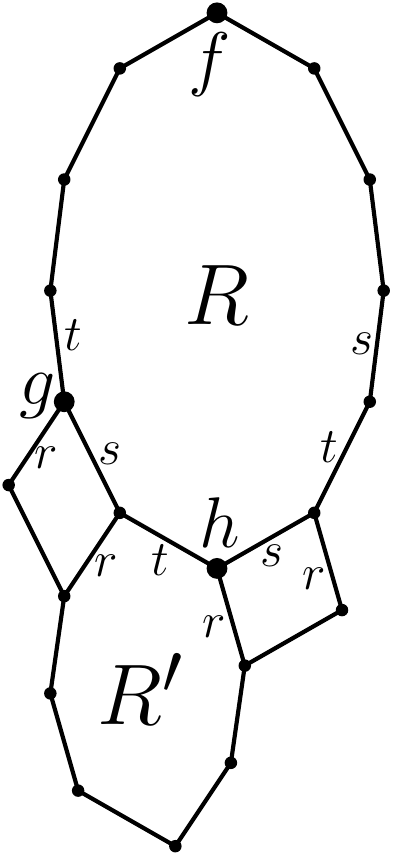}
	\end{center}
	\caption{Proof of Proposition~\ref{prop:main}, Case 2.}\label{fig:step3}
\end{figure}

Consequently both $\Pi(g)$ and $\Pi^{k'}(g')$ for some $k'\leq 2$ lie in the $\{t,r\}$-residue $R'$ from Figure~\ref{fig:step3}. This completes Case~2.

\smallskip

Note that if, say, $m=1,m'=0$, then we can take $k=1,k'=0.$
Thus it remains to consider the case where $m'=m=1$.

\smallskip

\noindent \textbf {Case 3: $m'=m=1$, and $T(g')=\{t,r\}.$} In other words, the second element of $T(g')$ coincides with that of $T(g)$.

If one of $m_{sr},m_{tr}$, say $m_{sr}$, equals $2$, then we can take $k=1,k'=0$, and we are done. If $m_{sr}=m_{tr}=3$, then we can take $k=k'=1$. It remains to consider the case, where, say, $m_{sr}\geq 4, m_{tr}\geq 3$.
Let $\gamma_0,\gamma_0'$ be the geodesic edge-paths from $f$ to $g,g'$, respectively.
If $m_{tr}\geq 4$, then we apply Lemma~\ref{lem:angles} with any $\gamma$ starting with $\gamma_0\overbrace
{rsr\cdots}^{m_{sr}}$, and any $\gamma'$ starting with $\gamma'_0\overbrace
{rtr\cdots}^{m_{tr}}$. We take $\mathcal W_1,\mathcal W_2,\mathcal W_3$ to be the walls dual to $r$-edges incident to $g,h,g',$ respectively. Then $\theta_1,\theta_3\geq \frac{\pi}{2}$, which is a contradiction. Analogously, if $m_{sr}\geq 6$, then $\theta_1\geq \frac{2\pi}{3},\theta_3\geq \frac{\pi}{3}$, contradiction.

We can thus assume $m_{tr}= 3,$ and $m_{sr}=4$ or $5$. In particular, $m_{st}\geq 3$. We now apply Corollary~\ref{cor:step1}, with $f$ replaced by
the $r$-neighbour $u$ of $h$ and $g$ replaced by the $s$-neighbour $\hat u$ of $u$, see Figure~\ref{fig:case3}. Since $T(u)=\{s,t\}$ with $m_{st}\geq 3$ and $T(\hat u)=\{t,r\}$ with $m_{tr}= 3$, Corollary~\ref{cor:step1}(v) yields a contradiction.

\begin{figure}[h!]
	\begin{center}
		\includegraphics[scale=0.57]{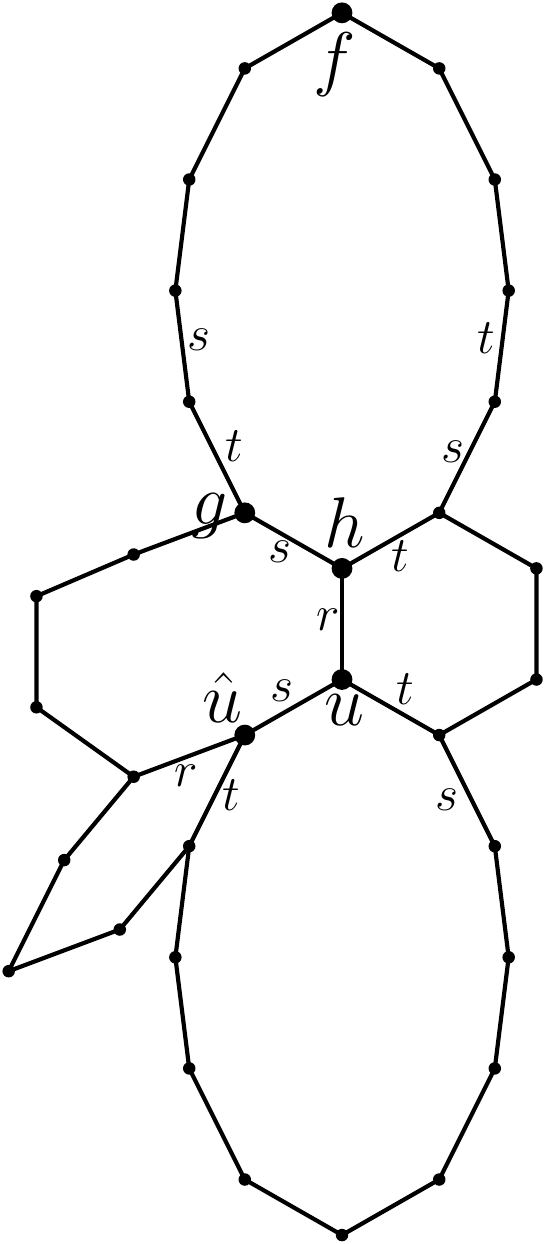}
	\end{center}
	\caption{Proof of Proposition~\ref{prop:main}, Case 3.}\label{fig:case3}
\end{figure}

\smallskip

\noindent \textbf {Case 4: $m'=m=1$, and $T(g')=\{t,p\}$ for some $p\neq r$.}

First note that $T(h)=\{r,p\}$ and so $m_{pr}<\infty$. If $m_{sr}=m_{tp}=2$, then we can take $k'=k=1$ and we are done. We now focus on the case $m_{sr}\geq 3$ and $m_{tp}\geq 3$.
By Corollary~\ref{cor:step1}(v), applied with $f$ replaced by~$g$ and $g$ replaced by $h$, we obtain $m_{sr}=3$ and $m_{rp}=2$. In particular, since $m_{tp}<\infty$,  we have $m_{tr}\geq 3$.
Let $\hat h$ be the $p$-neighbour of~$h$. We then apply Lemma~\ref{lem:angles} to geodesic edge-paths $\gamma,\gamma'$ from $g$ to $\id$, where $\gamma$ starts with the edge-path of length $m_{sr}$ in the $\{s,r\}$-residue of $g$ starting with the $r$-edge, and $\gamma'$ starts with the $s$-edge, the $p$-edge, followed by the edge-path of length $m_{tr}$ in the $\{t,r\}$ residue of $\hat h$ starting with the $t$-edge. See Figure~\ref{fig:case4}. We consider the walls $\mathcal W_1, \mathcal W_2$ dual to the $r$-edges incident to $g,h$, respectively, and $\mathcal W_3$ dual to the $t$-edge incident to $\hat h$. We have $\theta_1= \frac{\pi}{3}, \theta_3\geq \frac{2\pi}{3}$, which is a contradiction.

\begin{figure}[h!]
	\begin{center}
		\includegraphics[scale=0.6]{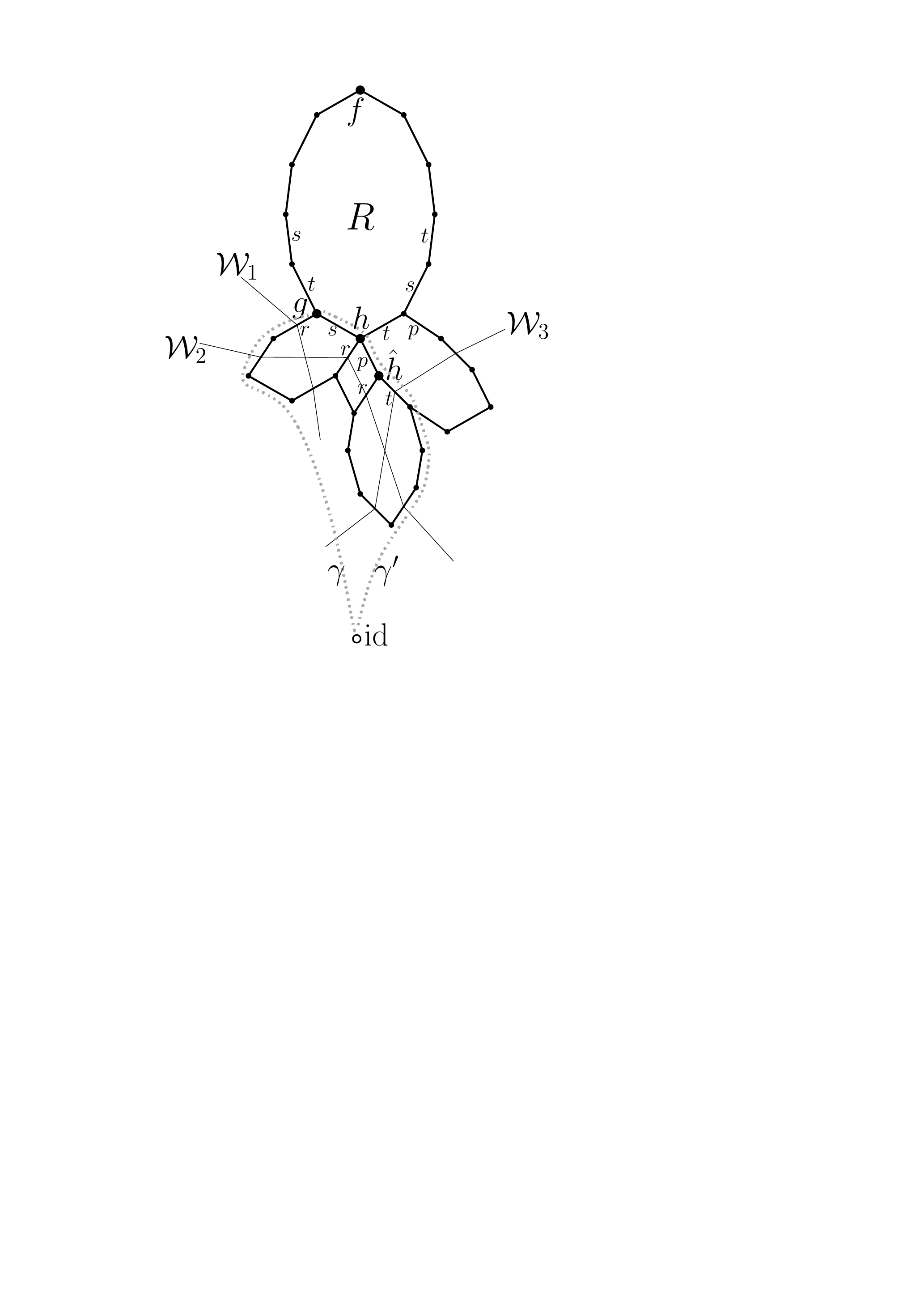}
	\end{center}
	\caption{Proof of Proposition~\ref{prop:main}, Case 4, $m_{sr}\geq 3$ and $m_{tp}\geq 3$.}\label{fig:case4}
\end{figure}

It remains to consider the case where, say, $m_{sr}\geq 3$ and $m_{tp}=2$. Then again by Corollary~\ref{cor:step1}(v), applied with $f$ replaced by $g$ and $g$ replaced by $h$, we obtain $m_{sr}=3$ and $m_{rp}=2$. Let $u$ be the $r$-neighbour of~$h$. Then $\Pi(g)$ lies in the $\{p,s\}$-residue $R'$ of $u$. Let $\hat h=\Pi(g')$, and let $\hat u$ be the $p$-neighbour of $u$, see Figure~\ref{fig:lastcase}. We have $m_{sp}\geq 6$ and so by Corollary~\ref{cor:step1}(v), applied with $f$ replaced by $u$ and $g$ replaced by $\hat u$, we obtain $T(\hat u)=\{s\}$. We claim that $T(\hat h)=\{r\}$ and so $\Pi(\hat h)$ also lies in $R'$, finishing the proof. To justify the claim, suppose $T(\hat h)=\{r,q\}$ with $q\neq r$. If $m_{rq}\geq 3$, then
we consider the walls $\mathcal W_1, \mathcal W_2$ dual to the $r$-edges incident to $g,h$, respectively, and $\mathcal W_3$ dual to the $q$-edge incident to $\hat h$, which leads to a contradiction as in the previous paragraph. If $m_{rq}=2$, then $q\neq s$ and so $T(\hat u)=\{s,q\}$, which is a contradiction.
This justifies the claim and completes Case~4.
\end{proof}

\begin{figure}[h!]
	\begin{center}
		\includegraphics[scale=0.63]{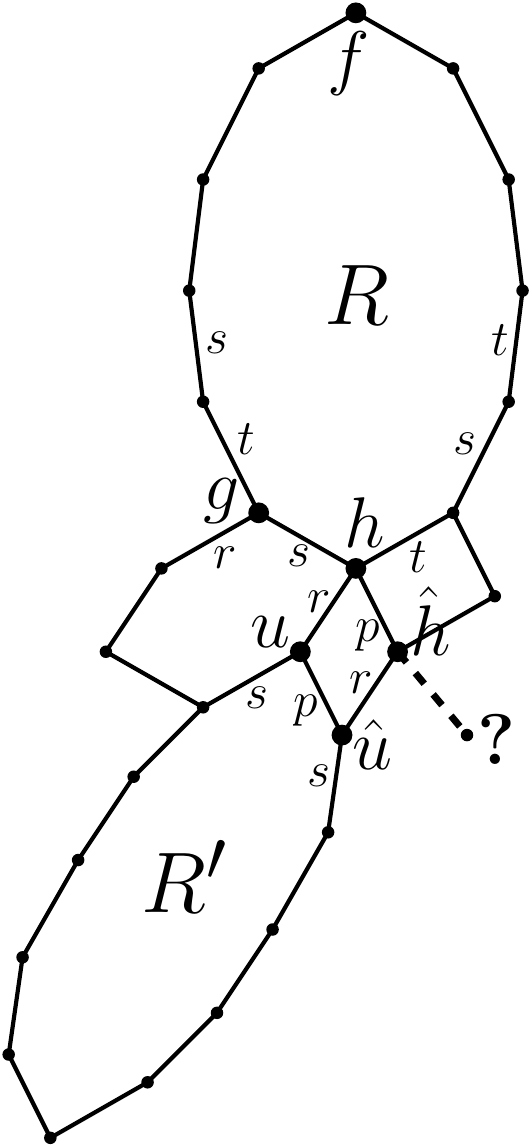}
	\end{center}
	\caption{Proof of Proposition~\ref{prop:main}, Case 4, $m_{sr}\geq 3$ and $m_{tp}=2$.}\label{fig:lastcase}
\end{figure}

\section{$\widetilde A_3$ Euclidean group}
\label{sec:dim3}

In this section it will be convenient to view the Cayley graph $X^1$ of the $\widetilde A_3$ Coxeter group $W$ as the dual graph to its Coxeter complex, which is the following subdivision of $\R^3$. (The reader might find it convenient to relate this subdivision intro tetrahedra with the standard subdivision of $\R^3$ into unit cubes.) Its vertices are triples of integers $(x,y,z)$ that are all odd or all even. Edges connect each vertex $(x,y,z)$ to vertices of the form $(x\pm 2,y,z),(x,y\pm 2,z),(x,y,z\pm 2),(x\pm 1,y\pm 1,z\pm 1)$, where the three signs can be chosen independently. See for example \cite[Thm~A]{Munro}, where this Coxeter complex is described as a subdivision of the hyperplane $\sigma$ in $\R^4$ defined by $x_1+x_2+x_3+x_4=0$, and the linear isomorphism with our subdivision of $\R^3$ is given by $(x,y,z)\mapsto (x+y+z,x-y-z, y-z-x, z-x-y)$. Furthermore, in Step~1 of the proof of \cite[Thm~A]{Munro}, we show that the tetrahedra of the Coxeter complex are obtained by subdividing $\sigma$ along a family of hyperplanes that, after identifying $\sigma$ with $\R^3,$ have equations $x\pm y=c, x\pm z=c,$ or $ y\pm z=c$, for $c$ even. 

In particular, in the second paragraph of Step~1 of the proof of \cite[Thm~A]{Munro}, we describe explicitly one of the tetrahedra as, after identifying $\sigma$ with $\R^3$, spanned on the clique with vertices 
$(-1,-1,-1),(-1,-1,1),(-2,0,0),(0,0,0)$. Using the action of $W$, this gives the following description of all the tetrahedra of our subdivision.
Namely, tetrahedra are spanned (up to permuting the coordinates) on cliques with vertices $(x,y,z-1),(x,y,z+1),(x+1,y-1,z),(x+1,y+1,z)$.
Each such tetrahedron has exactly two edges of length $2$, and the segment $e=((x,y,z),(x+1,y,z))$ joining their centres has length $1$. We can equivariantly embed $X^1$ into $\R^3$ by mapping each vertex into the centre of a tetrahedron, and mapping each edge affinely. Consequently, we can identify elements $g\in W$ with segments of the form $e_g=((x,y,z),(x+1,y,z))$, where $y+z$ is odd, up to permuting the coordinates. We identify $\id\in W$ with $e_{\id}=((0,0,1),(0,1,1))$. In particular, the point $O=(0,0,0)$ belongs to the identity tetrahedron. Note that for each $g\in W, s\in S$, the segments $e_g,e_{gs}$ are incident. Furthermore, walls in $X^1$ extend to subcomplexes of $\R^3$ isometric to Euclidean planes, and such a wall is adjacent to $g\in W$ if and only if it contains a face of the tetrahedron containing $e_g$.

\begin{lemma}
\label{lem:stepin A3}
Let $|x_0|+1<y_0<z_0$. Let $g\in W$ be such that
\begin{enumerate}[(i)]
\item $e_g=((x_0,y_0,z_0),(x_0+1,y_0,z_0))$, or
\item $e_g=((x_0,y_0,z_0),(x_0,y_0,z_0+1))$.
\end{enumerate}
Then $\ell(w(g))$ equals, respectively,
\begin{enumerate}[(i)]
\item $3$, or
\item $2$.
\end{enumerate}
Furthermore, $e_{\Pi(g)}$ is equal to the translate of $e_g$ by, respectively,
\begin{enumerate}[(i)]
\item $(0,-1,-1)$, or
\item $(0,0,-1)$.
\end{enumerate}
\end{lemma}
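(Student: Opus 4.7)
\emph{Proof plan.}
My approach is to work entirely in the embedding of $X^1$ into $\R^3$ recalled above. For each case I would first identify the tetrahedron $T_g$ of the Coxeter complex whose centre segment is $e_g$. In case~(i), $T_g$ has vertices $(x_0,y_0,z_0\pm 1)$ and $(x_0+1,y_0\pm 1,z_0)$; a cross-product computation shows its four faces lie on the hyperplanes
\[
x+y=x_0+y_0,\quad y-x=y_0-x_0,\quad z-x=z_0-x_0-1,\quad x+z=x_0+z_0+1,
\]
each of which is a wall adjacent to $g$. An analogous computation in case~(ii) gives a tetrahedron with vertices $(x_0\pm 1,y_0,z_0)$ and $(x_0,y_0\pm 1,z_0+1)$ and face-walls $y+z=y_0+z_0$, $z-y=z_0-y_0$, $z-x=z_0-x_0+1$, $x+z=x_0+z_0+1$.

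Next, since $T(g)$ corresponds to the walls adjacent to $g$ that separate $g$ from $\id$, I would sign-test the defining linear functional of each wall at the midpoint of $e_g$ (which lies in $T_g$) and at the origin $O=(0,0,0)\in T_{\id}$. The hypothesis $|x_0|+1<y_0<z_0$ forces $x_0+y_0>0$, $y_0-x_0>0$, $z_0-x_0>1$, $y_0+z_0>0$, and $z_0-y_0>0$, and a routine check then singles out exactly two separating walls in each case: $x+y=x_0+y_0$ together with $z-x=z_0-x_0-1$ in case~(i); $y+z=y_0+z_0$ together with $z-y=z_0-y_0$ in case~(ii). The pairs of normal vectors $(1,1,0),(-1,0,1)$ and $(0,1,1),(0,-1,1)$ satisfy $\cos\theta=-\tfrac12$ and $\cos\theta=0$, so $\langle T(g)\rangle$ is dihedral of order $6$ in case~(i) and Klein four in case~(ii), yielding $\ell(w(g))=3$ and $\ell(w(g))=2$ respectively.

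Finally, I would compute $e_{\Pi(g)}$ by applying the Euclidean isometry $gw(g)g^{-1}$ to $T_g$ and reading off the centre segment of the image tetrahedron. In case~(i), $gw(g)g^{-1}$ is the reflection in the third wall through the common edge of the two separating walls, namely $y+z=y_0+z_0-1$; tracking the image of each vertex of $T_g$ shows that the length-$2$ edges of the image have midpoints $(x_0,y_0-1,z_0-1)$ and $(x_0+1,y_0-1,z_0-1)$, giving the claimed $e_{\Pi(g)}$. In case~(ii), $gw(g)g^{-1}$ is the rotation by $\pi$ about the axis $\{(x,y_0,z_0):x\in\R\}$, which fixes the length-$2$ edge from $(x_0-1,y_0,z_0)$ to $(x_0+1,y_0,z_0)$ and sends $(x_0,y_0\mp 1,z_0+1)$ to $(x_0,y_0\pm 1,z_0-1)$, producing $e_{\Pi(g)}=\bigl((x_0,y_0,z_0-1),(x_0,y_0,z_0)\bigr)$. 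The main obstacle throughout is bookkeeping: one must correctly identify the four face-walls, track which two of them separate $g$ from $\id$, and then pin down the geometric realisation of $gw(g)g^{-1}$ as reflection or rotation; once those are in hand, the remaining verifications are routine Euclidean computations.
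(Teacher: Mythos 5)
Your overall strategy is the same as the paper's: identify the tetrahedron $T_g$ whose centre segment is $e_g$, read off the four face-walls, decide by sign-testing which of them separate $g$ from $\id$, and then compute the action of $gw(g)g^{-1}$ on $e_g$. Within the cases you actually treat, your computations check out: the wall equations, the selection of the two separating walls (using $x_0+y_0>0$, $z_0-x_0>1$, $y_0+z_0>0$, $z_0-y_0>0$), the identification of $\langle T(g)\rangle$ as dihedral of order $6$ resp.\ Klein four, and the description of $gw(g)g^{-1}$ as the reflection in $y+z=y_0+z_0-1$ resp.\ the half-turn about $\{(x,y_0,z_0)\}$ all give the claimed translates $(0,-1,-1)$ and $(0,0,-1)$.

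However, there is a genuine gap: your identification of $T_g$ is valid only for one parity in each case, and the proof silently assumes it. Vertices of the Coxeter complex are integer triples with all coordinates of the same parity, so $(x_0,y_0,z_0\pm1),(x_0+1,y_0\pm1,z_0)$ are vertices only when $x_0+y_0$ is even; when $x_0+y_0$ is odd (not excluded by the hypothesis $|x_0|+1<y_0<z_0$, and unavoidable in the application, since translating $e_g$ by $(0,-1,-1)$ flips this parity at every step of the iteration in the proof of Theorem~\ref{thm:second}) the tetrahedron containing $e_g$ is instead spanned by $(x_0,y_0\pm1,z_0),(x_0+1,y_0,z_0\pm1)$, its face-walls are $x+y=x_0+1+y_0$, $x-y=x_0+1-y_0$, $x+z=x_0+z_0$, $x-z=x_0-z_0$, and the separating pair is now the second and third of these. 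Likewise in case~(ii) your tetrahedron is correct only when $y_0+z_0$ is even; for $y_0+z_0$ odd it has vertices $(x_0,y_0\pm1,z_0),(x_0\pm1,y_0,z_0+1)$ and the separating walls are $x+z=x_0+z_0$ and $x-z=x_0-z_0$. The conclusions of the lemma do hold in these omitted subcases (the analogous computation again yields the translates $(0,-1,-1)$ and $(0,0,-1)$), but your proof as written does not cover them: you must split each case according to the parity of $x_0+y_0$ (case~(i)) or $y_0+z_0$ (case~(ii)), exactly as the paper does, and carry out the wall and isometry computations in the complementary subcases.
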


\begin{proof} In case (i), suppose first that $x_0+y_0$ is even. Then $e_g$ lies in the tetrahedron with vertices $(x_0,y_0,z_0-1),(x_0,y_0,z_0+1),(x_0+1,y_0-1,z_0),(x_0+1,y_0+1,z_0)$. The walls adjacent to~$g$ are the hyperplanes containing the faces of this tetrahedron, which are $x+y=x_0+y_0, x-y=x_0-y_0, x+z=x_0+1+z_0, x-z=x_0+1-z_0$. Projecting $e_g,O,$ and these walls onto the $xy$ plane (Figure~\ref{fig:proj1}(a)), or the $xz$ plane (Figure~\ref{fig:proj1}(b)), we obtain that $e_g$ is separated from $O$ exactly by the first and fourth among these walls.

\begin{figure}[h!]
	\begin{center}
		\includegraphics[scale=0.63]{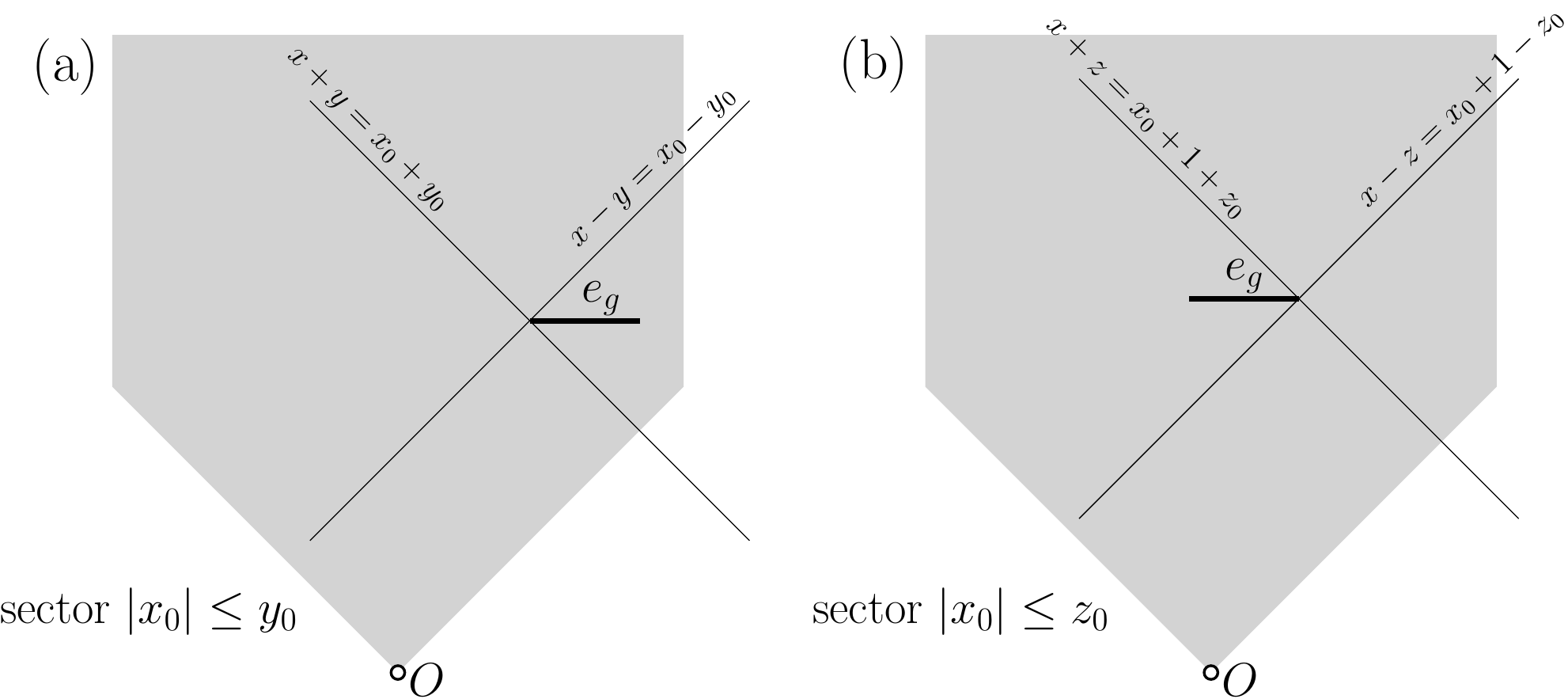}
	\end{center}
	\caption{Proof of Lemma~\ref{lem:stepin A3}, case (i), $x_0+y_0$ even.}\label{fig:proj1}
\end{figure}

\begin{figure}[h!]
	\begin{center}
		\includegraphics[scale=1]{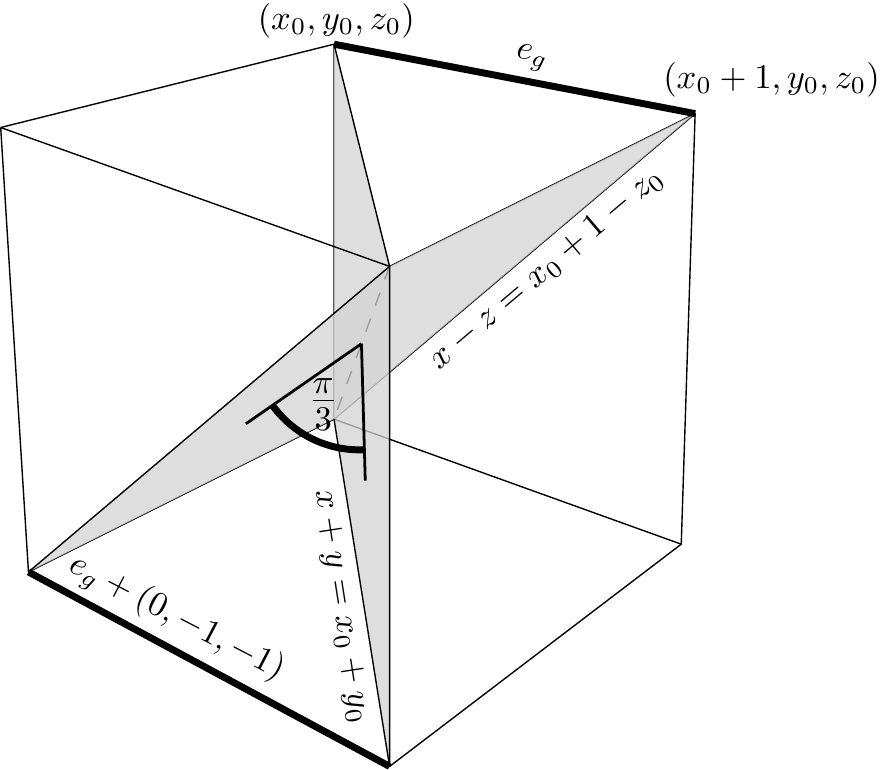}
	\end{center}
	\caption{Proof of Lemma~\ref{lem:stepin A3}, case (i), $x_0+y_0$ even: the two walls.}\label{fig:cube}
\end{figure}

Consequently, $gT(g)g^{-1}$ consists of the reflections in the first and fourth of these walls. These reflections preserve the cube spanned by $e_g$ and its translates by $(0,-1,0),(0,0,-1),$ and $(0,-1,-1)$, see Figure~\ref{fig:cube}. The longest element (of length~$3$) in the group that these reflections generate maps $e_g$ to its translate by $(0,-1,-1)$.

Secondly, suppose that $x_0+y_0$ is odd. Then $e_g$ lies in the tetrahedron with vertices $(x_0,y_0-1,z_0),(x_0,y_0+1,z_0),(x_0+1,y_0,z_0-1),(x_0+1,y_0,z_0+1)$. Thus the walls adjacent to $g$ are $x+y=x_0+1+y_0, x-y=x_0+1-y_0, x+z=x_0+z_0, x-z=x_0-z_0$. Hence, as illustrated in Figure~\ref{fig:proj2}(a,b), $e_g$ is separated from $O$ exactly by the second and third among these walls.
Consequently, $gT(g)g^{-1}$ consists of the reflections in the second and third of these walls. The longest element (of length $3$) in the group they generate maps $e_g$ to its translate by $(0,-1,-1)$ as before.

\begin{figure}[h!]
	\begin{center}
		\includegraphics[scale=0.63]{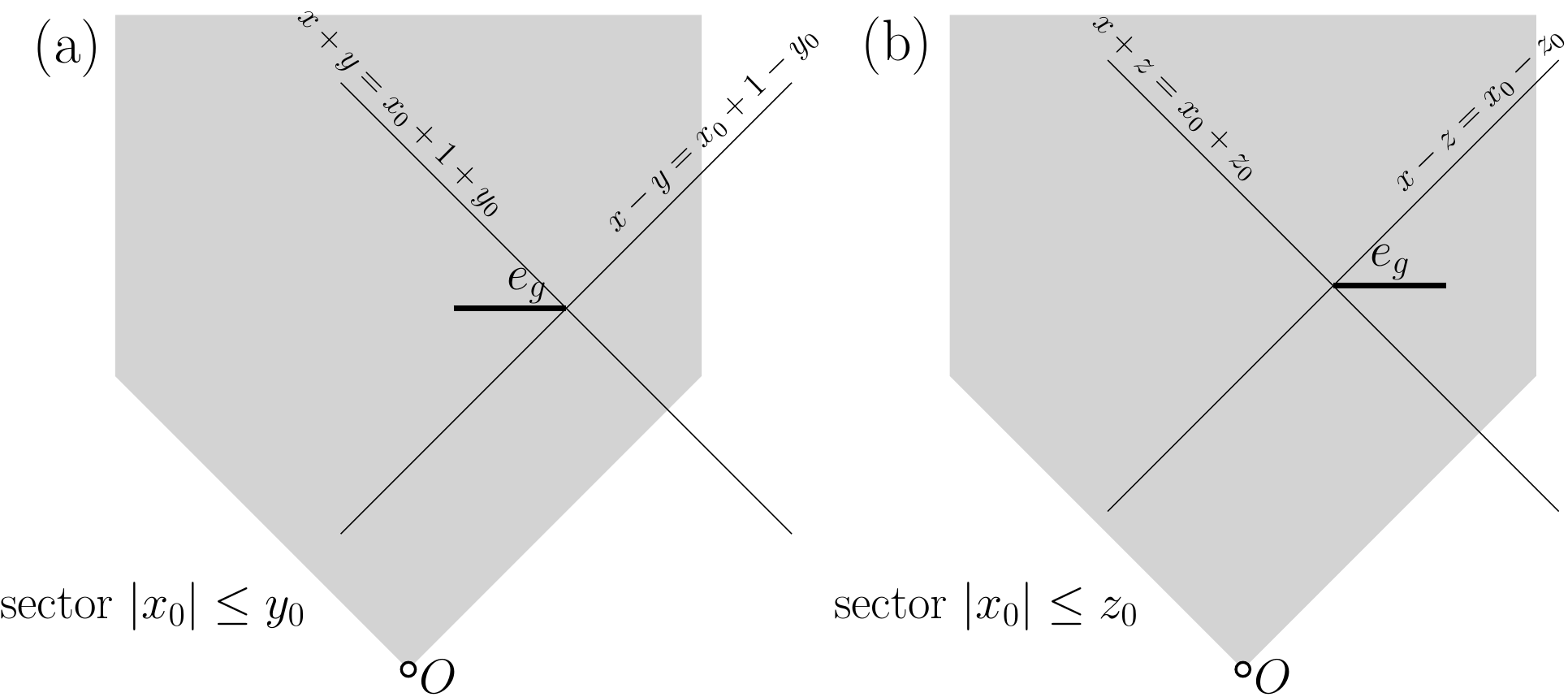}
	\end{center}
	\caption{Proof of Lemma~\ref{lem:stepin A3}, case (i), $x_0+y_0$ odd.}\label{fig:proj2}
\end{figure}

In case (ii), suppose first that $y_0+z_0$ is odd. Then $e_g$ lies in the tetrahedron with vertices $(x_0,y_0-1,z_0),(x_0,y_0+1,z_0),(x_0-1,y_0,z_0+1),(x_0+1,y_0,z_0+1)$.
Thus the walls adjacent to $g$ are $x+z=x_0+z_0, x-z=x_0-z_0, y+z=y_0+z_0+1, y-z=y_0-z_0-1$.
Hence, as illustrated in Figure~\ref{fig:proj3}(a,b), $e_g$ is separated from $O$ exactly by the first and second among these walls.
Consequently, $gT(g)g^{-1}$ consists of the reflections in the first and second of these walls. These reflections commute and preserve the square spanned by $e_g$ and its translate by $(0,0,-1)$. The longest element in the group these reflections generate (i.e.\ their composition) maps $e_g$ to its translate by $(0,0,-1)$.

\begin{figure}[h!]
	\begin{center}
		\includegraphics[scale=0.63]{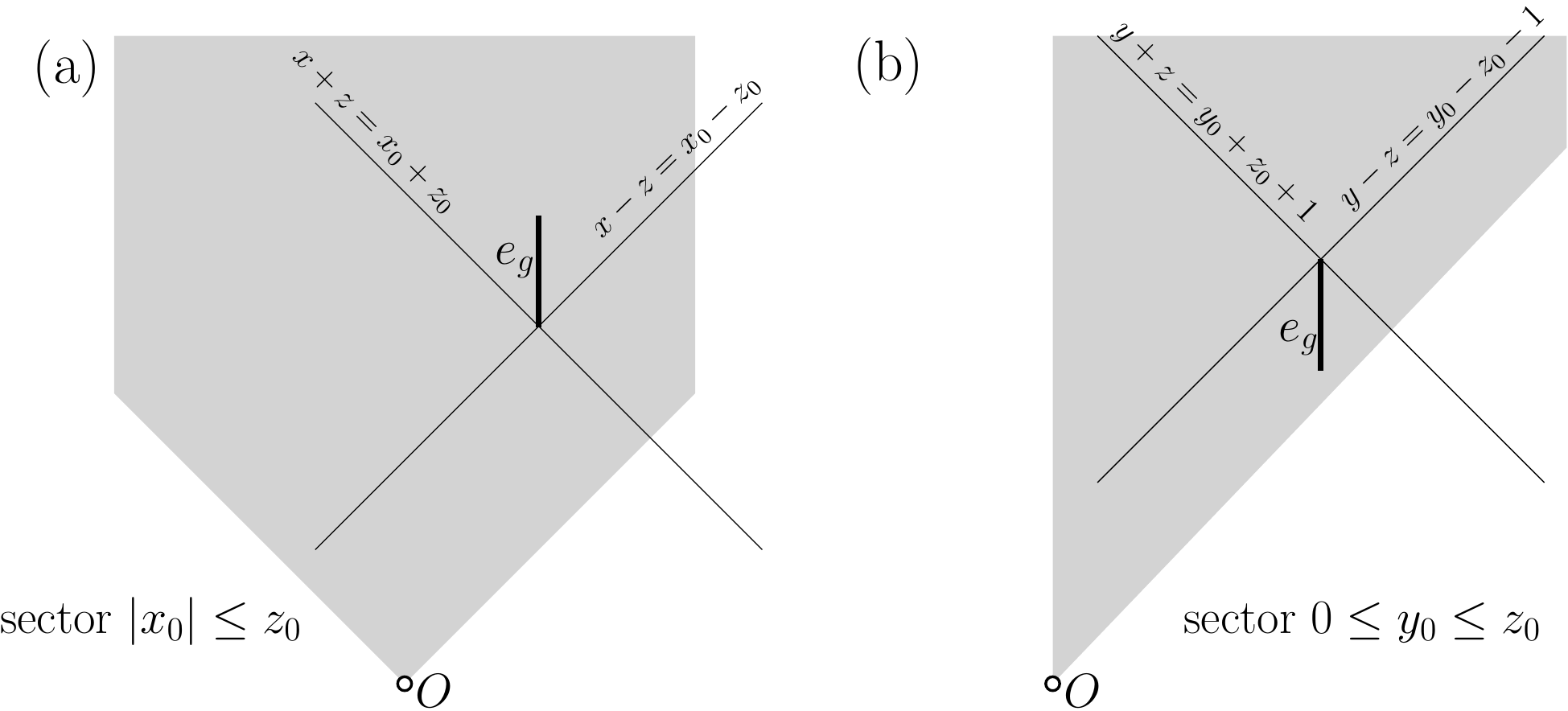}
	\end{center}
	\caption{Proof of Lemma~\ref{lem:stepin A3}, case (ii), $y_0+z_0$ odd.}\label{fig:proj3}
\end{figure}

\begin{figure}[h!]
	\begin{center}
		\includegraphics[scale=0.63]{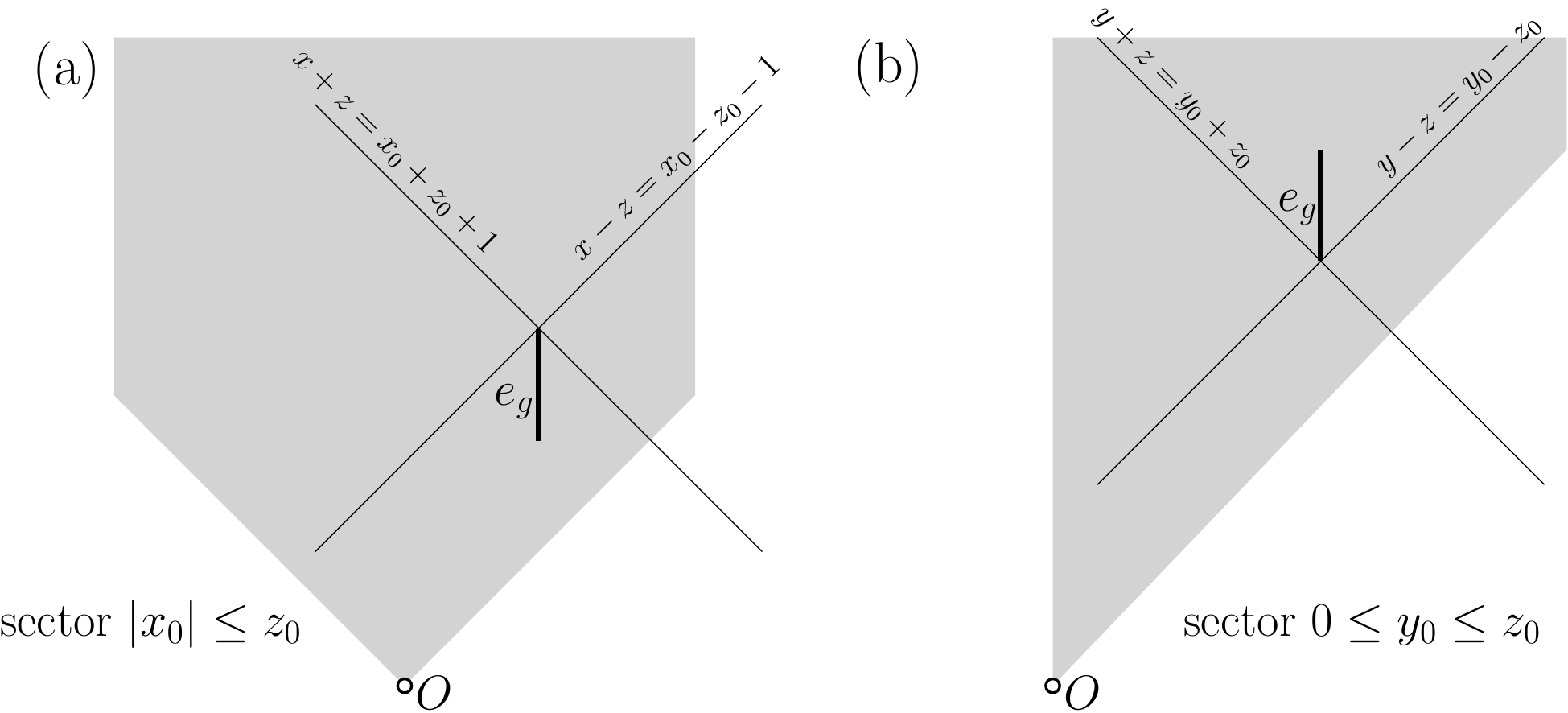}
	\end{center}
	\caption{Proof of Lemma~\ref{lem:stepin A3}, case (ii), $y_0+z_0$ even.}\label{fig:proj4}
\end{figure}

Secondly, suppose that $y_0+z_0$ is even. Then $e_g$ lies in the tetrahedron with vertices $(x_0-1,y_0,z_0),(x_0+1,y_0,z_0),(x_0,y_0-1,z_0+1),(x_0,y_0+1,z_0+1)$.
Thus the walls adjacent to $g$ are $x+z=x_0+z_0+1, x-z=x_0-z_0-1, y+z=y_0+z_0, y-z=y_0-z_0$.
Hence, as illustrated in Figure~\ref{fig:proj4}(a,b), $e_g$ is separated from $O$ exactly by the third and
fourth among these walls.
Consequently, $gT(g)g^{-1}$ consists of the (commuting)
reflections in the third and fourth of these walls.
The longest element in the group they generate
maps~$e_g$ to its translate by $(0,0,-1)$ as before.
\end{proof}

\begin{proof}[Proof of Theorem~\ref{thm:second}] Let $\mathcal L$ be the standard language.
For each $C>0$ consider the following $g,g'\in W$ with incident segments
$$e_g=((x_0,y_0,z_0),(x_0+1,y_0,z_0)),\ e_{g'}=((x_0,y_0,z_0),(x_0,y_0,z_0+1))$$
with $x_0,z_0$ even and $y_0$ odd, satisfying
$|x_0|+C<y_0\leq z_0-C$. Suppose that
$g,g'$ are represented by $v,v'\in\mathcal L$ of length $N,N'$ (which differ by $1$).
By Lemma~\ref{lem:stepin A3}, for $n,n'\leq C$ we have that $v(N-3n)$ represents the element of $W$ corresponding to the segment
$e_g-n(0,1,1)$ and $v'(N'-2n')$ represents the element of $W$ corresponding to the segment
$e_{g'}-n'(0,0,1)$. In particular, for $i=3n=2n',$ we see that the
segments corresponding to $v(N-i)$ and $v'(N'-i)$ are
$((x_0,y_0-n,z_0-n),(x_0+1,y_0-n,z_0-n))$ and $
((x_0,y_0,z_0-\frac{3}{2}n),
(x_0,y_0,z_0+1-\frac{3}{2}n))$.
Thus they are at Euclidean distance $\geq n$, so in particular
$\ell(v(N-i)^{-1}v'(N'-i))\geq n$. This shows that part
(ii) of the definition of biautomaticity does not hold for
$\mathcal L$.
\end{proof}

\begin{bibdiv}
\begin{biblist}

\bib{Bahls2006}{article}{
	author={Bahls, Patrick},
	title={Some new biautomatic Coxeter groups},
	journal={J. Algebra},
	volume={296},
	date={2006},
	number={2},
	pages={339--347}}

\bib{BHa}{book}{
	author={Bridson, Martin R.},
	author={Haefliger, Andr\'e},
	title={Metric spaces of non-positive curvature},
	series={Grundlehren der Mathematischen Wissenschaften [Fundamental
		Principles of Mathematical Sciences]},
	volume={319},
	publisher={Springer-Verlag},
	place={Berlin},
	date={1999}}

\bib{BH}{article}{
	author={Brink, Brigitte},
	author={Howlett, Robert B.},
	title={A finiteness property and an automatic structure for Coxeter
		groups},
	journal={Math. Ann.},
	volume={296},
	date={1993},
	number={1},
	pages={179--190}}

\bib{Cap2009}{article}{
	author={Caprace, Pierre-Emmanuel},
	title={Buildings with isolated subspaces and relatively hyperbolic
		Coxeter groups},
	journal={Innov. Incidence Geom.},
	volume={10},
	date={2009},
	pages={15--31}}

\bib{CaMu2005}{article}{
	author={Caprace, Pierre-Emmanuel},
	author={M\"{u}hlherr, Bernhard},
	title={Reflection triangles in Coxeter groups and biautomaticity},
	journal={J. Group Theory},
	volume={8},
	date={2005},
	number={4},
	pages={467--489}}

\bib{CaSh1995}{article}{
	author={Cartwright, Donald I.},
	author={Shapiro, Michael},
	title={Hyperbolic buildings, affine buildings, and automatic groups},
	journal={Michigan Math. J.},
	volume={42},
	date={1995},
	number={3},
	pages={511--523}}

\bib{E}{book}{
	author={Epstein, David B. A.},
	author={Cannon, James W.},
	author={Holt, Derek F.},
	author={Levy, Silvio V. F.},
	author={Paterson, Michael S.},
	author={Thurston, William P.},
	title={Word processing in groups},
	publisher={Jones and Bartlett Publishers, Boston, MA},
	date={1992}}

\bib{FHT}{article}{
	author={Farb, Benson},
	author={Hruska, Chris},
	author={Thomas, Anne},
	title={Problems on automorphism groups of nonpositively curved polyhedral
		complexes and their lattices},
	conference={
		title={Geometry, rigidity, and group actions},
	},
	book={
		series={Chicago Lectures in Math.},
		publisher={Univ. Chicago Press, Chicago, IL},
	},
	date={2011},
	pages={515--560}}

\bib{GeSh1}{article}{
	author={Gersten, Stephen M.},
	author={Short, Hamish},	
	title={Small cancellation theory and automatic groups},
	journal={Invent. Math.},
	volume={102},
	date={1990},
	number={2},
	pages={305--334}}

\bib{GeSh2}{article}{
	author={Gersten, Stephen M.},
	author={Short, Hamish},
	title={Small cancellation theory and automatic groups: Part II},
	journal={Invent. Math.},
	volume={105},
	date={1991},
	number={3},
	pages={641--662}}

\bib{LeMi}{article}{
	title={Commensurating {HNN}-extensions: non-positive curvature and biautomaticity},
	author={Leary, Ian},
	author={Minasyan, Ashot},
	eprint={arXiv:1907.03515},
	status={to appear},
	journal={Geom. Topol.},
	date={2020}}

\bib{LS}{book}{
   author={Lyndon, Roger C.},
   author={Schupp, Paul E.},
   title={Combinatorial group theory},
   note={Ergebnisse der Mathematik und ihrer Grenzgebiete, Band 89},
   publisher={Springer-Verlag, Berlin-New York},
   date={1977},
   pages={xiv+339}}

\bib{Munro}{article}{
	author={Munro, Zachary},
	title={Weak modularity and $\widetilde{A}_n$ buildings},
	status={submitted},
	date={2019},
	eprint={arXiv:1906.10259}}

\bib{NR}{article}{
   author={Niblo, Graham A.},
   author={Reeves, Lawrence D.},
   title={The geometry of cube complexes and the complexity of their
   fundamental groups},
   journal={Topology},
   volume={37},
   date={1998},
   number={3},
   pages={621--633}}

\bib{NiRe2003}{article}{
	author={Niblo, Graham A.},
	author={Reeves, Lawrence D.},
	title={Coxeter groups act on ${\rm CAT}(0)$ cube complexes},
	journal={J. Group Theory},
	volume={6},
	date={2003},
	number={3},
	pages={399--413}}

\bib{Nos2000}{article}{
	author={Noskov, Gennady A.},
	title={Combing Euclidean buildings},
	journal={Geom. Topol.},
	volume={4},
	date={2000},
	pages={85--116}}


\bib{R}{book}{
   author={Ronan, Mark},
   title={Lectures on buildings},
   series={Perspectives in Mathematics},
   volume={7},
   publisher={Academic Press, Inc., Boston, MA},
   date={1989},
   pages={xiv+201}}

\bib{Sw}{article}{
	author={\'{S}wi{\pla}tkowski, Jacek},
	title={Regular path systems and (bi)automatic groups},
	journal={Geom. Dedicata},
	volume={118},
	date={2006},
	pages={23--48}}

\end{biblist}
\end{bibdiv}

\end{document}